\def\R{\mathbb{R}}
\def\Z{\mathbb{Z}}
\def\deg{{\mathrm{deg}}}
\def\shfL{\mathcal{L}}
\def\ovl{\overline}
\definecolor{mred}{rgb}{0.83, 0.0, 0.0}
\definecolor{darkspringgreen}{rgb}{0.09, 0.45, 0.27}
\definecolor{ruby}{rgb}{0.88, 0.07, 0.37}
\def\colorsout#1{\bgroup\markoverwith{\textcolor{#1}{\rule[0.5ex]{2pt}{0.7pt}}}\ULon} 
\def\coloruline#1{\bgroup\markoverwith{\textcolor{#1}{\rule[-0.5ex]{2pt}{0.7pt}}}\ULon} 
\theoremstyle{plain}
\numberwithin{equation}{section}
\newcommand\tint{\mathop{\mathpalette\tb@int{t}}\!\int}
\newcommand\bint{\mathop{\mathpalette\tb@int{b}}\!\int}
\newcommand\tb@int[2]{%
  \sbox\z@{$\m@th#1\int$}%
  \if#2t%
    \rlap{\hbox to\wd\z@{%
      \hfil
      \vrule width .35em height \dimexpr\ht\z@+1.4pt\relax depth -\dimexpr\ht\z@+1pt\relax
      \kern.05em 
    }}
  \else
    \rlap{\hbox to\wd\z@{%
      \vrule width .35em height -\dimexpr\dp\z@+1pt\relax depth \dimexpr\dp\z@+1.4pt\relax
      \hfil
    }}
  \fi
}
\newcommand*\suppresschapternumber{%
  \let\@makechapterhead\@makeschapterhead
  \patchcmd{\@chapter}
    {\protect\numberline{\thechapter}}
    {}
    {}{}%
}
\newcommand*\removedotbetweenchapterandsection{%
  \renewcommand\thesection{\thechapter\@arabic\c@section}%
}
\newcommand{\gl}{{\rm GL}}
\newcommand{\spec}{{\rm Spec~}}
\long\def\comment#1{}
\newtheorem{thm}{Theorem}[section] 
\newtheorem{prop}[thm]{Proposition}
\newtheorem{lem}[thm]{Lemma}
\newtheorem{defn}[thm]{Definition}
\newtheorem{cor}[thm]{Corollary}
\newtheorem{warn}[thm]{Warning}
\theoremstyle{definition} 
\theoremstyle{remark} 
\newtheorem{rem}[thm]{Remark}
\begin{document}

\title{Geometric height on flag varieties in positive characteristic}
\author{Yue Chen}
\address{Department of Mathematics, Cornell University, 14850, Ithaca, New York, U.S.A}
\email{yc2949@cornell.edu}

\author{Haoyang Yuan}
\address{School of Mathematics, Nanjing University, Nanjing 210093, China}
\email{zgqyhy@163.com}

\begin{abstract}
Let $k$ be an algebraically closed field of characteristic $p\neq 0$. Let $G$ be a connected reductive group over $k$, $P \subseteq G$ be a parabolic subgroup and $\lambda: P \longrightarrow \mathbb G_m$ be a strictly anti-dominant character.  Let $C$ be a projective smooth curve over $k$ with function field $K=k(C)$ and   $F$ be a principal $G$-bundle on $C$. We compute explicitly the height filtration and successive minima of the height function $h_{\mathcal{L}_\lambda}: X(\overline{K}) \longrightarrow \mathbb{R}$  over the flag variety $X=(F/P)_K$ through relatively ample line bundle the $\mathcal{L}_\lambda=F \times_P k_\lambda$ on $F/P$.
\end{abstract}

\maketitle
\setcounter{tocdepth}{1}
\tableofcontents

\section{Introduction}
\subsection{Height filtration and successive minima}
Let $K$ be either a number field or $K=k(C)$ where $C$ is a projective smooth curve over a field $k$. Let $X$ be a projective variety of dimension $d$ over $K$ and $\overline{L}$ be an adelic line bundle on $X$. These datum induce an Arakelov height function $h_{\overline{L}}$ on $X$ (see \cite[\textsection 9]{Yuan_iccm2010} for a survey). 

A typical case is the \textit{geometric height}, which is the one we concern in this article. Here we give an equivalent definition.

In our case $K=k(C)$, suppose there is a projective flat morphism $\mathcal{X} \longrightarrow C$ with the generic fiber $X \longrightarrow \operatorname{Spec}(K)$ and 
 a line bundle $\mathcal{L}$   on $\mathcal{X}$. The data $(\mathcal{X},\mathcal{L})$ define an adelic line bundle $\ovl L$ and the height function $h_{\ovl L}$ is given by
        \begin{flalign*}
            \quad \quad \quad  &
                h_{\overline{L}}: X(\overline{K}) \longrightarrow \mathbb{Q}, \; x \longmapsto \frac{\mathcal{L} \cdot \overline{\{x\}}}{\deg(x)} \;\; \text{where $\overline{\{x\}}$ is the closure of $x$ in $\mathcal{X}$}.
            &&
        \end{flalign*}
We also denote this by $h_\shfL$ if there is no ambiguity.  For a different choice $(\mathcal{X},\mathcal{L})$, the height functions differ by a bounded function, see \cite{Yuan_iccm2010}.

For any $t \in \mathbb{R}$, let $Z_t \subseteq X$ be the Zariski closure of the set $\big\{ x \in X(\overline{K}): h_{\overline{L}}(x) < t \big\}$. We call $\big\{ Z_t: t \in \mathbb{R} \big\}$ the \emph{height filtration}, and call its jumping points the \emph{successive minima}. Note that this definition of successive minima is slightly different from Zhang \cite{Zhang_smallPoints}. Zhang considers only dimension jumps $e_i=\inf\big\{ t: \dim Z_t \geq d-i+1 \big\}$, $i=1,\dots,d+1$. 


In \cite{fan2024arakelovgeometryflagvarieties}, Fan-Luo-Qu provides a new case where height filtration can be explicitly computed, which is the geometric height on flag varieties under the assumption ${\rm char}(k)=0$. Following their idea, we complete this calculation by examining this case in positive characteristic. 

Following previous notations, let $G$ be a connected reductive group over $k$, $P \subseteq G$ be a parabolic subgroup, and $\lambda: P \longrightarrow \mathbb{G}_m$ be a strictly antidominant character. Let $F$ be a principal $G$-bundle over $C$ and $\mathcal{X}=F/P$ with generic fiber $X=(F/P)_K$. Let $\mathcal{L}_\lambda = F \times_P k_\lambda$ and $h_{\mathcal{L}_\lambda}: X(\overline{K}) \longrightarrow \mathbb{R}$ be the induced height function by the following diagram. 

\[ \xymatrix{
X=(F/P)_K \ar[d] \ar[r] & \mathcal{X}=F/P \ar[d] \\
\operatorname{Spec}(K) \ar[r] & C
} \]

Denote $F_Q$ to be the canonical reduction of $F$, which is a principal $Q$-bundle for a parabolic subgroup $Q\subseteq G$, and also $\deg(F_Q) \in X(T)^\vee_\mathbb{Q}$ to be the induced cocharacter. Let $W,W_P$ and $W_Q$ be the Weyl group of $G,L_P$ and $L_Q$ correspondingly. For $w \in W_Q \backslash W/W_P$, let $C_w=(F_Q \times_Q QwP/P)_K \subseteq X$ be the corresponding Schubert cell. The numbers $\langle \deg (F_{Q }), w\lambda \rangle$ are well-defined. One main result in \cite{fan2024arakelovgeometryflagvarieties} computes the height filtration in characteristic zero.

\begin{thm}[Theorem 2.1, \cite{fan2024arakelovgeometryflagvarieties}] \label{qu}
Assume $k$ has characteristic zero. For any $t \in \mathbb{R}$, let $Z_t \subseteq X$ be the Zariski closure of the set $\big\{ x \in X(\overline{K}): h_{\mathcal{L}_\lambda}(x) < t \big\}$. Then
\begin{flalign*}
    \quad \quad \quad &
    Z_t = X \Bigg\backslash \coprod_{\langle \deg (F_{Q }), w\lambda \rangle\geq t} C_w = \coprod_{\langle \deg (F_{Q }), w\lambda \rangle<t} C_w.
    &&
\end{flalign*} \label{intro_ thm of ht filtration}
\end{thm}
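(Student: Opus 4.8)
The plan is to translate the height into the degree of a reduction of structure group on a covering curve, to use the canonical reduction $F_Q$ to index the Schubert strata $C_w$, and then to pin down each $Z_t$ from two facts: a sharp lower bound for $h_{\mathcal{L}_\lambda}$ on each stratum, and Zariski density of the small-height locus inside each stratum. First I would unwind the height. Given $x\in X(\overline{K})$, let $\pi\colon C'\to C$ be the normalization of $C$ in the residue field $K(x)$, put $n=[K(x):K]$ and $F'=\pi^*F$; then $x$ spreads out to a reduction $F'_P$ of $F'$ to $P$, and the definition of the geometric height gives $h_{\mathcal{L}_\lambda}(x)=\tfrac1n\deg_{C'}(F'_P\times_P k_\lambda)$. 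Because $\mathrm{char}(k)=0$, the formation of the canonical reduction commutes with pullback along $\pi$ (semistability of the Levi bundle $F_Q\times_Q L_Q$ is preserved, and Behrend's dominance/regularity condition on $\deg(F_Q)$ is merely rescaled by $n$), so $F'_Q:=\pi^*F_Q$ is the canonical reduction of $F'$, with $\deg(F'_Q)=n\deg(F_Q)$. The relative position of the two reductions $F'_P$ and $F'_Q$ is constant on a dense open of $C'$ (the generic, hence minimal, value), equal to some $w\in W_Q\backslash W/W_P$; by construction this is exactly the $w$ with $x\in C_w(\overline{K})$, and the strata $C_w$ pull back to the Schubert strata of $F'/P$ relative to $F'_Q$.

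The arithmetic core is the lower bound: for $x\in C_w(\overline{K})$ one has $h_{\mathcal{L}_\lambda}(x)\ge\langle\deg(F_Q),w\lambda\rangle$, i.e.\ $\deg_{C'}(F'_P\times_P k_\lambda)\ge\langle\deg(F'_Q),w\lambda\rangle$. Over the dense open where $F'_P$ and $F'_Q$ are in position $w$, both reductions come from a common reduction to the stabilizer $R_w\subseteq G$ of a pair of flags in $G/Q\times G/P$, and there $F'_P\times_P k_\lambda$ is, up to the ``central'' twist carried by $F'_Q$, the line bundle attached to this $R_w$-reduction through the character $(w\lambda)|_{R_w}$. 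Semistability of $F'_Q\times_Q L_Q$ then forces the sign of its degree: for $\mathrm{GL}_n$ this is precisely the Harder--Narasimhan bound on the degrees of the terms of a flag inside a bundle with prescribed slopes; for general $G$ it is the Atiyah--Bott/Behrend estimate for degrees of bundles associated to sub-reductions of a semistable bundle, and after accounting for the central twist one recovers exactly $\langle\deg(F'_Q),w\lambda\rangle=n\langle\deg(F_Q),w\lambda\rangle$. I expect the fussiest step to be matching this bookkeeping with the combinatorics of $Q\backslash G/P$ — the pattern of intersection ranks prescribed by $w$ — and it is here that strict antidominance of $\lambda$ is used: it makes $\langle\deg(F_Q),w\lambda\rangle$ the genuine minimum of $h_{\mathcal{L}_\lambda}$ on $C_w$ (not merely a bound), attained only over the open part of the stratum.

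For the matching density statement — that for every $t>\langle\deg(F_Q),w\lambda\rangle$ the set $\{x\in C_w(\overline{K}):h_{\mathcal{L}_\lambda}(x)<t\}$ is Zariski dense in $C_w$ — I would build reductions in position $w$ that are as compatible with $F'_Q$ as possible: after passing to a suitable cover $C'$, realize such a reduction as a general section of a high tensor power $\mathcal{L}_\lambda^{\otimes m}$ over the relevant stratum of the flag bundle, using relative ampleness of $\mathcal{L}_\lambda$, the preservation of semistability under tensor and symmetric powers in characteristic zero, and a Bertini argument to ensure the section sweeps out an irreducible curve inside $C_w$ with $\mathcal{L}_\lambda$-degree close to the minimum; letting the cover and the section vary then gives density. (Already for $\mathrm{GL}_n$ the minimum need not be attained over a single cover, because a stable Harder--Narasimhan piece has no subbundle of its own slope, so this approximation-over-covers argument is really needed.)

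Finally one needs the combinatorial monotonicity $w'\le w\Rightarrow\langle\deg(F_Q),w'\lambda\rangle\le\langle\deg(F_Q),w\lambda\rangle$, which holds because $\deg(F_Q)$ lies in the dominant chamber and is $W_Q$-invariant (defining properties of the canonical reduction) while $\lambda$ is antidominant, so $w\lambda-w'\lambda$ pairs non-negatively with $\deg(F_Q)$. Together with the closure relation $\overline{C_w}=\coprod_{w'\le w}C_{w'}$ this assembles the theorem: by the lower bound $\{h_{\mathcal{L}_\lambda}<t\}\subseteq\coprod_{\langle\deg(F_Q),w\lambda\rangle<t}C_w$, so $Z_t$ is contained in this union; the union is closed by monotonicity; and by the density statement it is contained in $Z_t$; hence equality, which is the asserted formula. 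The one genuinely characteristic-zero ingredient is the compatibility of the canonical reduction — and of semistability — with pullback and with tensor operations; this is exactly what Frobenius destroys in characteristic $p$, so in positive characteristic $\deg(F_Q)$ has to be replaced by a Frobenius-stable (``asymptotic'') version, which is the point of departure for the rest of the paper.
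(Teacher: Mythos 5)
Your overall architecture matches the paper's (which follows Fan--Luo--Qu): a lower bound $h_{\mathcal{L}_\lambda}(x)\ge\langle\deg(F_Q),w\lambda\rangle$ on each stratum $C_w$ via compatibility of the canonical reduction with pullback in characteristic zero, a matching density/essential-minimum statement on each $X_w$, and assembly using the Bruhat closure relations together with the monotonicity of $w\mapsto\langle\deg(F_Q),w\lambda\rangle$. The lower-bound step and the assembly step are essentially the paper's. Where you diverge is the converse direction. The paper does not run a Bertini/sections construction; it invokes Ballaÿ's theorem, $\zeta_{\mathrm{ess}}(X_w)=\lim_n \mu_{\max}(\pi_*\mathcal{L}_\lambda^{\otimes n}|_{\mathcal{X}_w})/n$, and then computes the right-hand side exactly: by a cohomology-and-base-change identity $\pi_*(\mathcal{L}_\lambda|_{\mathcal{X}_w})=F_Q\times_Q H^0(\overline{QwP}/P,M_\lambda)$, and by Schieder's result that the weight-threshold filtration attached to $\deg(F_Q)$ on such an associated bundle \emph{is} its Harder--Narasimhan filtration, so the maximal slope is $\langle\deg(F_Q),w\lambda\rangle$. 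That HN identification is what replaces your phrase ``degree close to the minimum'' with an actual number. Your sketch of producing dense small-height points via general sections of $\mathcal{L}_\lambda^{\otimes m}$ and passing to covers is in spirit what lies behind Ballaÿ's theorem, but as written it conflates a section of a line bundle (a divisor) with a point of $C_w$, and it gives no mechanism for checking that the achievable degrees actually approach $\langle\deg(F_Q),w\lambda\rangle$ rather than some larger threshold; you would essentially have to reprove the relevant direction of Ballaÿ's estimate plus the Schieder computation to close that gap. So: same skeleton and same lower bound, but the paper's route through $\pi_*\mathcal{L}_\lambda^{\otimes n}$ and its HN filtration is the substantive content your density paragraph leaves implicit.
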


From now on, we assume $k$ has positive characteristic unless other description is given. We compute the height filtration in this case. We show that this result does not hold in characteristic $p$ unless one puts the following assumption on $F$, namely $F$ admits a strongly canonical reduction. 

\begin{defn}[Definition \ref{strongly}]
    We say a reduction $F_Q$ to a parabolic subgroup $Q$ is strongly canonical if for any non-constant finite morphism $f: C' \to C$, the pullback $f^*(F_Q)$ is the canonical reduction of $f^*F$.
\end{defn}

\begin{thm}[Theorem \ref{charp}]
    The result in Theorem \ref{qu} holds under the assumption that $F$ admit a strongly canonical reduction. 
\end{thm}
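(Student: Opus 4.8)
The strategy is to run the proof of Theorem~\ref{qu} from \cite{fan2024arakelovgeometryflagvarieties} and to isolate the single step that uses ${\rm char}(k)=0$, then to observe that the strongly canonical hypothesis is exactly what is needed to replace it. We first record the reformulation of the height as a degree: a point $x\in X(\overline K)$ is defined over $K'=k(C')$ for some non-constant finite morphism $f\colon C'\to C$ (possibly inseparable), it corresponds to a section $\sigma_x$ of $(f^*F)/P\to C'$, and
\[
h_{\mathcal L_\lambda}(x)=\frac{1}{\deg f}\,\deg_{C'}\sigma_x^*\bigl(f^*\mathcal L_\lambda\bigr),\qquad f^*\mathcal L_\lambda=(f^*F)\times_P k_\lambda .
\]
We also use that $\deg_{C'}f^*\mathcal M=(\deg f)\deg_C\mathcal M$ for a line bundle $\mathcal M$ on $C$, so $\deg(f^*F_Q)=(\deg f)\deg(F_Q)$ and $\langle\deg(f^*F_Q),w\lambda\rangle=(\deg f)\langle\deg(F_Q),w\lambda\rangle$ for every $w$.

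\emph{Step 1: the inclusion $Z_t\subseteq\coprod_{\langle\deg(F_Q),w\lambda\rangle<t}C_w$.} Let $x\in C_w(\overline K)$ and $f\colon C'\to C$ be as above. Then $\sigma_x$ lands generically in the Schubert cell of $(f^*F)/P$ attached to the reduction $f^*F_Q$ and the class $w\in W_Q\backslash W/W_P$. Here the hypothesis enters: since $F_Q$ is strongly canonical, $f^*F_Q$ is the canonical reduction of $f^*F$; in particular the Levi bundle $f^*F_Q/R_u(Q)$ is semistable, and the positivity of $\deg(f^*F_Q)$ against the roots of $R_u(Q)$ holds (that condition being numerical, it merely scales by $\deg f$). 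Feeding this into the standard lower bound for $\deg_{C'}\sigma_x^*(f^*\mathcal L_\lambda)$ in terms of the canonical reduction of a $G$-bundle --- which rests only on semistability of the Levi bundle together with the strict antidominance of $\lambda$, and is valid with the same proof in arbitrary characteristic --- one obtains
\[
\deg_{C'}\sigma_x^*\bigl(f^*\mathcal L_\lambda\bigr)\ \geq\ \langle\deg(f^*F_Q),w\lambda\rangle=(\deg f)\,\langle\deg(F_Q),w\lambda\rangle ,
\]
hence $h_{\mathcal L_\lambda}(x)\geq\langle\deg(F_Q),w\lambda\rangle$. Thus $\{\,x\in X(\overline K):h_{\mathcal L_\lambda}(x)<t\,\}\subseteq\coprod_{\langle\deg(F_Q),w\lambda\rangle<t}C_w$; since $w\mapsto\langle\deg(F_Q),w\lambda\rangle$ is non-decreasing along the Bruhat order on $W_Q\backslash W/W_P$ (a root-theoretic fact, using dominance of $\deg(F_Q)$ and antidominance of $\lambda$), the index set is downward closed and the right-hand side is a finite union of Schubert varieties, hence Zariski closed; the inclusion follows.

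\emph{Step 2: the inclusion $Z_t\supseteq\coprod_{\langle\deg(F_Q),w\lambda\rangle<t}C_w$.} It suffices to show that for every $w$ with $\langle\deg(F_Q),w\lambda\rangle<t$ the points of $C_w$ of height $<t$ are Zariski dense in $C_w$; then $Z_t\supseteq\overline{C_w}$ for each such $w$, and together with Step 1 this gives
\[
Z_t=\coprod_{\langle\deg(F_Q),w\lambda\rangle<t}C_w=X\Bigg\backslash\coprod_{\langle\deg(F_Q),w\lambda\rangle\geq t}C_w .
\]
This is the constructive half of \cite{fan2024arakelovgeometryflagvarieties}: one uses that $C_w=F_Q\times_Q(QwP/P)$ is, over $C$, a partial flag bundle for the Levi bundle $F_Q/R_u(Q)$ with the residual unipotent directions forming affine-space bundles, in order to produce --- after pulling back along suitable finite covers $C'\to C$ --- families of sections $\sigma$ for which $\frac{1}{\deg f}\deg_{C'}\sigma^*(f^*\mathcal L_\lambda)$ equals, or is arbitrarily close to, $\langle\deg(F_Q),w\lambda\rangle$ and which sweep out a dense subset of $C_w$. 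The only inputs are the existence of sections of affine bundles over curves (Tsen's theorem, Riemann--Roch), so this step is characteristic-free.

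\emph{The main obstacle} is the verification that the sole place ${\rm char}(k)=0$ is used in \cite{fan2024arakelovgeometryflagvarieties} is the assertion that the pullback of the canonical reduction is again canonical, and that the strongly canonical hypothesis genuinely repairs it. Two points deserve care. First, a point of $X(\overline K)$ may be defined over an inseparable extension $K'/K$, so $f$ may involve a power of Frobenius; this is why the hypothesis must be imposed for \emph{all} non-constant finite $f$, and it is also the mechanism by which it can fail, since Frobenius pullback may destabilize the Levi bundle --- and must do so for some $F$, for otherwise Theorem~\ref{qu} would hold unconditionally in characteristic $p$. Second, one must arrange the lower bound of Step 1 so that it consumes only the statement ``$f^*F_Q$ is the canonical reduction of $f^*F$'' and nothing else characteristic-dependent; granting that, the combinatorial ingredients (Bruhat order, Schubert stratification, the scaling $\deg(f^*F_Q)=(\deg f)\deg(F_Q)$) and the section construction of Step 2 transfer to positive characteristic without change.
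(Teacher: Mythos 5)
Your proposal correctly identifies that the task is to isolate where ${\rm char}(k)=0$ is used in the Fan--Luo--Qu argument and to see that strong canonicity repairs it, and your Step 1 (the lower bound, via pullback to $C'$ and the scaling $\deg(f^*F_Q)=(\deg f)\deg(F_Q)$) is consistent with the paper. However, you have misdiagnosed where the characteristic-$p$ difficulty actually sits, and as a result the crucial part of the argument is missing.

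The paper records that the lower bound $h_{\mathcal L_\lambda}(x)\geq\langle\deg F_Q,w\lambda\rangle$ on $C_w(\overline K)$ carries over from \cite{fan2024arakelovgeometryflagvarieties} \emph{without} any new hypothesis: it is the other half, computing the essential minimum $\zeta_{\rm ess}(X_w)=\langle\deg F_Q,w\lambda\rangle$ on each Schubert variety, that forces the strong-canonicity assumption. Your Step 2 is precisely this upper bound, and you wave it off as ``characteristic-free'' and a matter of ``sections of affine bundles over curves (Tsen's theorem, Riemann--Roch).'' This is the gap. To get the exact value $\langle\deg F_Q,w\lambda\rangle$ the paper passes through Balla\"y's formula $\zeta_{\rm ess}(X_w)=\lim_n\mu_{\max}(\pi_*\mathcal L_{n\lambda}|_{\mathcal X_w})/n$, and therefore must compute the maximal slope of the pushforward bundle $F_Q\times_Q \H^0(\overline{QwP}/P,M_\lambda)$. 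This is done by proving (Proposition \ref{HN}) that the weight filtration $V_{\bullet,F_Q}$ is the Harder--Narasimhan filtration of the associated bundle $V_{F_G}$; the semistability of the graded pieces is proved by applying the Ramanan--Ramanathan theorem (Theorem \ref{RR}), which requires the Levi bundle to be \emph{strongly} semistable. This is the precise point where ordinary canonical reduction fails in characteristic $p$ (Frobenius pullback can destabilize, so the associated bundles need not be semistable), and it is exactly why the hypothesis ``$F$ admits a strongly canonical reduction'' appears. None of this --- Proposition \ref{HN}, Theorem \ref{RR}, Balla\"y's theorem, or the Jantzen identification of $\pi_*\mathcal L_\lambda|_{\mathcal X_w}$ as $F_Q\times_Q\H^0(\overline{QwP}/P,M_\lambda)$ --- appears in your proposal, and your claim that the density half is characteristic-independent is precisely the false assertion that the hypothesis was introduced to correct.

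Separately, two smaller points: (i) you assert in Step 1 that strong canonicity is ``where the hypothesis enters,'' but in the paper's accounting the lower bound already holds for the canonical reduction alone; (ii) your remark that failure of Theorem~\ref{qu} unconditionally in characteristic $p$ forces the existence of destabilizing Frobenius pullbacks is a plausible heuristic but is not a proof ingredient, whereas the paper supplies the concrete mechanism (failure of Proposition~\ref{HN}, footnoted there).
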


Canonical reduction is preseved by pullbacks along separated morphisms, which means the canonical reduction is strong in characteristic zero. For arbitrary \(G\)-bundles that may not admit a strongly canonical reduction, we have the following rough form of our another main result. 

\begin{thm}\label{rough}
    The height filtration of $X$ is given by successively deleting some Frobenius twists of Schubert cells. 
\end{thm}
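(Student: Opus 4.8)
\medskip

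\noindent\emph{Proof idea.} The plan is to absorb the positive-characteristic pathology into a single Frobenius pullback of $F$, thereby reducing to the strongly canonical situation governed by Theorem~\ref{charp}, and then to transport the resulting height filtration back along a relative Frobenius of $X$.

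\emph{Behaviour of the canonical reduction under base change.} Since $h_{\mathcal L_\lambda}$ is insensitive to enlarging the field of definition of a point, any $x\in X(\ov K)$ may be assumed defined over an extension of $K$ that factors as a finite separable extension followed by $\mathrm{Frob}^m$ for some $m\ge 0$. Using the intrinsic description of the canonical reduction $F_Q$ (semistability of the Levi quotient $F_Q/R_u(Q)$ together with the signs of the degrees $\deg(F_Q\times_Q\chi)$), one checks that pulling $F_Q$ back along a finite \emph{separable} morphism of smooth projective curves again yields the canonical reduction: Galois descent of the maximal destabilising subsheaf controls semistability, and the degree conditions are merely multiplied by a positive integer. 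Hence the failure of being strongly canonical is purely a Frobenius matter; concretely, a canonical reduction $F_Q$ is strongly canonical precisely when $F_Q/R_u(Q)$ is \emph{strongly} semistable.

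\emph{Stabilisation (the main obstacle).} One then has to prove that there is an integer $n_0\ge 0$ for which $(\mathrm{Frob}^{n_0}_C)^*F$ admits a strongly canonical reduction $F_{Q_\infty}$ — equivalently, that after finitely many Frobenius pullbacks the Levi quotient of the canonical reduction becomes strongly semistable. This is the genuinely characteristic-$p$ input and, I expect, the crux: it is \emph{not} enough that the chain of parabolics attached to the canonical reductions of the $(\mathrm{Frob}^n_C)^*F$ be finite, since that parabolic can stay constant while a later Frobenius step still destabilises the Levi quotient. The argument should go through a faithful representation: the $p^{-n}$-normalised Harder--Narasimhan slopes of $(\mathrm{Frob}^n_C)^*$ of an associated bundle are non-decreasing and, by Langer's bound on the destabilising defect of Frobenius pullback, bounded above, hence convergent; feeding this back shows that the Harder--Narasimhan filtration of $(\mathrm{Frob}^n_C)^*F$ eventually has strongly semistable graded pieces, i.e.\ the canonical reduction stabilises.

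\emph{Descent along relative Frobenius.} Fix such an $n_0$. Write $X^{(p^{n_0})}=X\times_{K,\mathrm{Frob}^{n_0}}K=\big((\mathrm{Frob}^{n_0}_C)^*F/P\big)_K$, with integral model $\mathcal X^{(p^{n_0})}=(\mathrm{Frob}^{n_0}_C)^*(F/P)$ and relatively ample line bundle $\mathcal L^{(n_0)}_\lambda$, and let $\mathrm{Fr}^{n_0}_{X/K}\colon X\to X^{(p^{n_0})}$ be the $n_0$-th relative Frobenius, a universal homeomorphism inducing a bijection on $\ov K$-points. From $(\mathrm{Fr}^{n_0}_{\mathcal X/C})^*\mathcal L^{(n_0)}_\lambda\cong\mathcal L_\lambda^{\otimes p^{n_0}}$ and a short computation with intersection numbers along the Zariski closures of points one gets $h_{\mathcal L_\lambda}(x)=p^{-n_0}h_{\mathcal L^{(n_0)}_\lambda}\big(\mathrm{Fr}^{n_0}_{X/K}(x)\big)$ for every $x\in X(\ov K)$, whence $Z_t=(\mathrm{Fr}^{n_0}_{X/K})^{-1}\big(Z^{(p^{n_0})}_{p^{n_0}t}\big)$, where $Z^{(p^{n_0})}_\bullet$ denotes the height filtration of $X^{(p^{n_0})}$. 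Since $(\mathrm{Frob}^{n_0}_C)^*F$ now carries a strongly canonical reduction, Theorem~\ref{charp} applies and identifies $Z^{(p^{n_0})}_s$ with the union of those Schubert cells $C_w\subseteq X^{(p^{n_0})}$, indexed by $W_{Q_\infty}\backslash W/W_P$, with $\langle\deg F_{Q_\infty},w\lambda\rangle<s$. Pulling back along $\mathrm{Fr}^{n_0}_{X/K}$, which carries closed subvarieties bijectively onto closed subvarieties, shows that $\{Z_t\}$ is obtained by successively deleting the subvarieties $(\mathrm{Fr}^{n_0}_{X/K})^{-1}(C_w)$ — the Frobenius twists of the Schubert cells of $X^{(p^{n_0})}$ — with jumping values $p^{-n_0}\langle\deg F_{Q_\infty},w\lambda\rangle$; this is the asserted description. (When $n_0=0$, in particular always in characteristic $0$, it recovers Theorem~\ref{qu}.)
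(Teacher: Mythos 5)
Your proof follows essentially the same route as the paper's proof of Theorem~\ref{twist}: pull back along a sufficiently high Frobenius power to obtain a $G$-bundle admitting a strongly canonical reduction, apply Theorem~\ref{charp} there, and transport the height filtration back through the resulting universal homeomorphism with the factor $p^{-n_0}$ on heights (your use of the relative Frobenius rather than the twist projection $\phi_K$ is an equivalent cosmetic variation, since the two are inverse to one another as homeomorphisms). The only real divergence is in the ``stabilisation'' step: the paper simply cites Langer's Theorem~5.1 \cite{Langer05} for the existence of such an $n_0$, whereas you sketch a re-derivation via bounds on the destabilising defect of Frobenius pullback — this is the right circle of ideas, but as written it is only a sketch and should be replaced by (or tightened into) the citation.
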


More explicitly, a theorem by Langer \cite{Langer05} shows for a principal $G$-bundle $F$ the strongly canonical reduction exists for $({\rm Fr}^n)^* F$, when $n$ is sufficiently large. Therefore, consider the following Cartesian diagram for such $n$, 
 \[\begin{tikzcd}
	(\operatorname{Fr_C}^n)^* F /P & F/P \\
	C & C
	\arrow["\phi",from=1-1, to=1-2]
	\arrow[from=1-1, to=2-1]
	\arrow[from=1-2, to=2-2]
	\arrow["\operatorname{Fr}_C^n" ,from=2-1, to=2-2]
\end{tikzcd}\]
with its generic fiber 
 \[\begin{tikzcd}
	\Tilde{X}=((\operatorname{Fr_K}^n)^* F /P)_K & X=(F/P)_K \\
	\spec K & \spec K
	\arrow["\phi_K",from=1-1, to=1-2]
	\arrow[from=1-1, to=2-1]
	\arrow[from=1-2, to=2-2]
	\arrow["\operatorname{Fr}_C^n" ,from=2-1, to=2-2]
\end{tikzcd}\]
where $\operatorname{Fr}_C$ is the absolute Frobenius on $C$.

\begin{thm}[Theorem \ref{twist}]
    Suppose $n$ is large enough in the above sense, the height filtration of $h_{\mathcal{L}_{\lambda}}$ on $X$ is given by the image of the height filtration of the height filtration of $\Tilde{X}$ along the homeomorphism $\phi_K$, the successive minima of $X$ are $\frac{1}{p^n}$ of the succesive minima of $\Tilde{X}$. 
\end{thm}

\begin{rem}
    Here the height filtration of $\Tilde{X}$ is known and computed by Theorem \ref{charp}. Thus, we say the height filtration of $X$ is given by successively deleting some Frobenius twist of Schubert cells as stated in Theorem \ref{rough}. 
\end{rem}

\subsection{Toy example: projective spaces} \label{example_grassmann}
Let $k$ be an algebraically closed field of positive characteristic and $C$ be a curve over $k$ with function field $K=k(C)$. Let $E$ be a vector bundle of rank $n$ on $C$. We take $\mathcal X=\mathbb P(E)$, $\mathcal L$ to be the relative ample line bundle $\mathcal O_{\mathbb P(E)}(1)$ and $X=\mathbb P(V)$ where $V$ is the generic fiber of $E$. A special case of a theorem by Ballaÿ \cite{Ballay21} shows that: 
Let $\zeta_{ess}:=\inf_{t\in \R}\{Z_t=X\}$. Then
    \[
    \zeta_{ess}=\lim_{n}\frac{\mu_{max}(\pi_*\mathcal L^{\otimes n})}{n}(=\lim_{n}\frac{\mu_{max}({\rm Sym}^n E)}{n}). 
    \]
    where $\mu_{\max}(-)$ is the maximal slope appears in the Harder-Narasimhan filtration of $\pi_*\mathcal L^{\otimes n}$. 

\begin{warn}
    In positive characteristic, the symmetric power of a semistable vector bundle may not remain semistable, making the left-hand side difficult to compute. This issue arises only in positive characteristic. However, this problem can be resolved by sufficiently twisting the Frobenius morphism multiple times.
\end{warn}

\begin{prop}[Ramanan, Ramanathan, \cite{Ramanan1985ProjectiveNO}]
    For above $E$ semistable, there exists $N$ large enough such that for $n\geq N$, any symmetric power of ${\rm Fr}^{n,*}_CE$ is semistable, where ${\rm Fr}_C$ is the absolute Frobenius of $C$. 
\end{prop}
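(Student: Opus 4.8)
The plan is to isolate the substantive claim: \emph{if $W$ is a strongly semistable bundle on $C$ --- meaning $({\rm Fr}_C^j)^*W$ is semistable for every $j\ge 0$ --- then ${\rm Sym}^m W$ is (strongly) semistable for every $m\ge 0$}. Granting this, the proposition follows from the discussion preceding it: after replacing $E$ by $({\rm Fr}_C^n)^*E$ for $n$ sufficiently large one is reduced, via the strong canonical reduction result quoted in the introduction (which produces, for such $n$, a canonical reduction whose Levi graded pieces are strongly semistable), to the case where the bundle in question is strongly semistable; the italicized claim then gives semistability of all its symmetric powers. So the content is entirely the italicized claim, which is the $\mathrm{GL}_r$, $\rho={\rm Sym}^m$ instance of Ramanan--Ramanathan's principle that associated bundles of strongly semistable bundles are again strongly semistable.

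To prove the claim I would run a Frobenius-descent argument on slopes. Let $U\subseteq{\rm Sym}^m W$ be a nonzero subsheaf; we must show $\mu(U)\le\mu({\rm Sym}^m W)=m\,\mu(W)$. Since $C$ is regular, each ${\rm Fr}_C^n$ is flat, hence $({\rm Fr}_C^n)^*$ is exact, and --- using that symmetric powers commute with base change --- we get inclusions
\[
({\rm Fr}_C^n)^*U\;\hookrightarrow\;({\rm Fr}_C^n)^*{\rm Sym}^m W\;=\;{\rm Sym}^m\bigl(({\rm Fr}_C^n)^*W\bigr).
\]
By hypothesis $W_n:=({\rm Fr}_C^n)^*W$ is semistable. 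The key input is a \emph{uniform bound}: there is a constant $c=c(g,\operatorname{rk}W,m)$, depending only on the genus $g$ of $C$, on $\operatorname{rk}W$, and on $m$ --- crucially \emph{not} on the slope --- such that $\mu_{\max}({\rm Sym}^m V)\le m\,\mu(V)+c$ for every semistable bundle $V$ on $C$ of rank $\operatorname{rk}W$. Applying this to $V=W_n$ and using $\mu\bigl(({\rm Fr}_C^n)^*U\bigr)=p^n\mu(U)$ and $\mu(W_n)=p^n\mu(W)$, one gets $p^n\mu(U)\le m\,p^n\mu(W)+c$, i.e.\ $\mu(U)\le m\,\mu(W)+c/p^n$; since $\mu(U)-m\,\mu(W)$ is a fixed rational with bounded denominator while $c/p^n\to 0$, this forces $\mu(U)\le m\,\mu(W)$. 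Running the same argument with $({\rm Fr}_C^j)^*{\rm Sym}^m W={\rm Sym}^m W_j$ shows ${\rm Sym}^m W$ is in fact strongly semistable, which is what legitimizes the reduction in the first paragraph.

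I expect the uniform bound to be the main obstacle; the delicate point is its independence of the slope, since the slopes of the $W_n$ grow like $p^n$ and any slope-dependent error term would destroy the final passage to the limit. I would prove it by combining boundedness of the moduli of semistable bundles of fixed rank and degree with the observation that the discrepancy $\mu_{\max}({\rm Sym}^m V)-m\,\mu(V)$ is invariant under twisting $V$ by a line bundle: this reduces the estimate to the finitely many residue classes of $\deg V$ modulo $\operatorname{rk}V$, and within each class the bundles ${\rm Sym}^m V$ range over a bounded family, on which $\mu_{\max}$ is bounded; alternatively one may quote Langer's bounds on $\mu_{\max}$ directly. Finally, it is worth recording why the characteristic-$p$ situation is genuinely harder --- this being exactly what the proposition is for: ${\rm Sym}^m W$ is only a \emph{quotient} of $W^{\otimes m}$, not a subbundle or a direct summand as in characteristic $0$, so semistability of ${\rm Sym}^m W$ cannot be read off from that of $W^{\otimes m}$; and even $W^{\otimes m}$ is semistable only because $W$ is \emph{strongly} semistable, tensor products of merely semistable bundles being possibly unstable --- so the reduction to the strongly semistable case via Frobenius twisting is essential, not cosmetic.
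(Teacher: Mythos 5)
Your core argument --- the italicized claim together with the Frobenius-limiting proof in your second and third paragraphs --- is correct, and it is a genuinely different route from the Ramanan--Ramanathan proof the paper cites (and does not reproduce): they establish that associated bundles of strongly semistable $G$-bundles are strongly semistable via Kempf's theory of the instability parabolic and its rationality, whereas you run a uniform slope estimate over the Frobenius tower and let the error term $c/p^n$ die in the limit. Your uniform bound $\mu_{\max}(\mathrm{Sym}^m V)\le m\,\mu(V)+c(g,\operatorname{rk} W,m)$, proved by noting that the discrepancy is invariant under line-bundle twists and then invoking boundedness of semistable bundles of fixed rank and degree, is also correct; this is in the spirit of Langer's approach and is arguably more elementary and self-contained than the GIT-based original.

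Where the proposal does have a genuine gap is the first paragraph. You assert that after replacing $E$ by $(\mathrm{Fr}_C^n)^*E$ for $n$ large, "one is reduced to the case where the bundle in question is strongly semistable." That reduction fails for $E$ semistable but not strongly semistable: once some $(\mathrm{Fr}_C^{n_0})^*E$ is unstable, every further pullback $(\mathrm{Fr}_C^{n})^*E$, $n\ge n_0$, is also unstable, since Frobenius pullback multiplies the slope gap of a destabilizing subbundle by $p$. What Langer's theorem gives is strongly semistable Harder--Narasimhan \emph{graded pieces}, not a strongly semistable bundle. In fact the proposition as printed is not literally true under mere semistability: taking $m=1$, $(\mathrm{Fr}_C^n)^*E=\mathrm{Sym}^1(\mathrm{Fr}_C^n)^*E$ is unstable for $n$ large whenever $E$ is not strongly semistable. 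The intended hypothesis has to be "strongly semistable," at which point the Frobenius twist in the conclusion is vacuous and the whole content is exactly your italicized claim. One small further correction to your last paragraph: a quotient of a semistable bundle of the \emph{same} slope is again semistable, so if $W^{\otimes m}$ were semistable one \emph{could} read off semistability of $\mathrm{Sym}^m W$ as a same-slope quotient; the genuine characteristic-$p$ obstruction is only the one you mention second, namely that $W^{\otimes m}$ need not be semistable when $W$ is merely semistable.
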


\begin{thm}[Baby-version]
    Fix a sufficiently large $n$ in the sense of the above proposition. Denote $V'={\rm Fr}_K^{n,*}V$ and $\{V'_i\}$ the generic fiber of Harder-Narasimhan filtration $\{E'_i\}$ of ${\rm Fr}_C^{n,*}E$. 
    \begin{enumerate}
        \item The height filtration of $\mathbb P(V')$ is given by $\mathbb{P}(V') \supsetneq \mathbb{P}(V'/V'_1) \supsetneq \mathbb{P}(V'/V'_2) \cdots \supsetneq \mathbb{P}(V'/V'_r)=\emptyset$ with successive minima $\mu_{\rm max}(E'_{i}/E'_{i-1})$. 
        \item The height filtration of $h_{\mathcal O(1)}$ on $\mathbb P(V)$ is the image of the height filtration of $h_{\mathcal O(1)}$ on $\mathbb P(V')$ along the natural map. The successive minima of $\mathbb P(V)$ are $\frac{1}{p^n}\mu_{\rm max}(E'_{i}/E'_{i-1})$. 
        
    \end{enumerate}
\end{thm}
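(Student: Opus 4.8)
The plan is to obtain the Baby-version as the case $G=\mathrm{GL}(V)$, $P$ a maximal parabolic, of Theorems~\ref{charp} and~\ref{twist}; the real content is the dictionary between the group-theoretic data and the Harder--Narasimhan (HN) data of $E$. First I would identify $\mathbb P(E)\to C$ with the flag bundle $F/P\to C$, where $F$ is the $\mathrm{GL}_n$-bundle of frames of $E$ and $P\subseteq\mathrm{GL}_n$ is the stabilizer of a hyperplane, a maximal parabolic, and check that $\mathcal O_{\mathbb P(E)}(1)=F\times_P k_\lambda$ for the evident strictly antidominant $\lambda$; likewise $\mathbb P(E')=(\mathrm{Fr}_C^n)^*F/P$ with $F'=(\mathrm{Fr}_C^n)^*F$. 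Under this identification the ingredients of Theorem~\ref{charp} applied to $F'$ translate as follows: $F'_Q$, the canonical reduction of $F'$, is the reduction to the parabolic $Q$ stabilizing the HN flag $0=E'_0\subsetneq\cdots\subsetneq E'_r=E'$, and $\deg(F'_Q)$ records the slopes $\mu_i:=\mu(E'_i/E'_{i-1})$ ($\mu_1>\cdots>\mu_r$) with multiplicities $\mathrm{rk}(E'_i/E'_{i-1})$; the $r$ double cosets $w\in W_Q\backslash W/W_P$ index the Schubert strata of $\mathbb P(V')$ attached to the flag $V'_\bullet=(E'_\bullet)_K$, which are the locally closed subsets $\mathbb P(V'/V'_{i-1})\setminus\mathbb P(V'/V'_i)$, $i=1,\dots,r$ (the $i$-th consisting of the rank-$1$ quotients of $V'_{\overline K}$ killing $V'_{i-1}$ but not $V'_i$); and a direct computation gives $\langle\deg F'_Q,w\lambda\rangle=\mu_i=\mu_{\max}(E'_i/E'_{i-1})$ on the $i$-th stratum, the last equality since each HN graded piece is semistable.

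Next I would check the hypothesis of Theorem~\ref{charp}, that $F'$ admits a strongly canonical reduction once $n$ is large. In vector-bundle terms this says that for every finite $C''\to C$ the pullback of the HN flag of $E'$ is still the HN flag of $E'_{C''}$; as the slopes only get multiplied by $[C'':C]$, this amounts to each graded piece $E'_i/E'_{i-1}$ remaining semistable under all finite base changes. Factoring $C''\to C$ into its separable part (which preserves semistability automatically) and its purely inseparable part (which is dominated by a power of the absolute Frobenius), this reduces to strong semistability of the $E'_i/E'_{i-1}$, i.e.\ semistability of all their Frobenius pullbacks, which is exactly what the cited Ramanan--Ramanathan/Langer result gives for $n\gg0$ (apply it to the semistable graded pieces of the HN filtration of $E$, or equivalently use Langer's theorem that $\mathrm{Fr}_C^{n,*}E$ has a strong HN filtration). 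Then Theorem~\ref{charp} applies to $\mathbb P(E')=F'/P$ and yields $Z_t=\coprod_{\mu_i<t}\bigl(\mathbb P(V'/V'_{i-1})\setminus\mathbb P(V'/V'_i)\bigr)$. Since $\mu_1>\cdots>\mu_r$, the indices with $\mu_i<t$ form a final segment $\{j+1,\dots,r\}$ and this union telescopes to $\mathbb P(V'/V'_j)$ with $j=\#\{i:\mu_i\ge t\}$; this is exactly the chain $\mathbb P(V')\supsetneq\mathbb P(V'/V'_1)\supsetneq\cdots\supsetneq\mathbb P(V'/V'_r)=\emptyset$ with jumps (successive minima) at $t=\mu_i=\mu_{\max}(E'_i/E'_{i-1})$, which is (1).

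Part (2) is Theorem~\ref{twist} in this case, and the only point to verify is the effect of $\phi_K$ on heights. The morphism $\phi\colon\mathbb P(E')=\mathbb P(E)\times_{C,\mathrm{Fr}_C^n}C\to\mathbb P(E)$ is finite of degree $p^n$ and purely inseparable, so $\phi_K$ is a universal homeomorphism and a bijection on $\overline K$-points, and $\mathcal O_{\mathbb P(E')}(1)=\phi^*\mathcal O_{\mathbb P(E)}(1)$. For $x'\in\mathbb P(V')(\overline K)$ with image $x$, the closure $\overline{\{x'\}}$ maps onto $\overline{\{x\}}$ with some degree $\delta\mid p^n$; the projection formula gives $\mathcal O_{\mathbb P(E')}(1)\cdot\overline{\{x'\}}=\delta\,\bigl(\mathcal O_{\mathbb P(E)}(1)\cdot\overline{\{x\}}\bigr)$, while comparing the two factorizations over $C$ of the normalization of $\overline{\{x'\}}$ gives $\deg(x')=\delta\deg(x)/p^n$, so $\delta$ cancels and $h_{\mathcal O(1)}(x')=p^n\,h_{\mathcal O(1)}(x)$ (the constant $p^n$ is checked on a $K$-point: the corresponding section of $\mathbb P(E')\to C$ is $s\circ\mathrm{Fr}_C^n$ and pulls $\mathcal O(1)$ back to its $p^n$-th power). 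Hence on $\mathbb P(V)$ the set $\{h<t\}$ is the $\phi_K$-image of $\{h<p^nt\}$ on $\mathbb P(V')$; taking Zariski closures, which $\phi_K$ preserves, identifies the height filtration of $\mathbb P(V)$ with the $\phi_K$-image of that of $\mathbb P(V')$ and rescales the successive minima by $1/p^n$ to $\tfrac{1}{p^n}\mu_{\max}(E'_i/E'_{i-1})$. The main obstacle is not an estimate but the bookkeeping in the first two paragraphs — aligning the $W_Q\backslash W/W_P$ Schubert combinatorics for $\mathrm{GL}_n$ and a maximal parabolic with the flag-of-subspaces stratification and the HN slopes, and pinning down the precise sense in which Ramanan--Ramanathan/Langer provides a ``strongly canonical reduction'' (strong semistability of the HN graded pieces, preserved under all finite, possibly inseparable, base changes). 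If one instead wants a self-contained proof of (1) not using Theorem~\ref{charp}, the crux is the lower bound $h(x)\ge\mu_i$ on the $i$-th stratum, where a nonzero map from the semistable bundle $(E'_i/E'_{i-1})_{C'}$ to a line bundle forces the slope inequality — and that is precisely where strong semistability, hence the hypothesis on $n$, is indispensable; the matching density statement is the routine constructive half.
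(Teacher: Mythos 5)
Your proposal is correct and follows the same route the paper intends: specialize Theorems~\ref{charp} and~\ref{twist} to $G=\mathrm{GL}_n$ with $P$ the stabilizer of a hyperplane, translating the canonical/strongly-canonical reduction and the $W_Q\backslash W/W_P$ combinatorics into the Harder--Narasimhan flag and its slopes, and checking the $p^n$ rescaling of heights under the Frobenius pullback via the projection formula. You also usefully pin down a point the paper leaves implicit, namely that ``sufficiently large $n$ in the sense of the above proposition'' should be read as applying the Ramanan--Ramanathan/Langer result to the HN graded pieces (equivalently, taking $n$ so that $\mathrm{Fr}_C^{n,*}E$ has a strong HN filtration), since the quoted proposition as stated only covers semistable $E$.
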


A byproduct would be the following equality: 
\[
L_{max}=\lim_{n}\frac{\mu_{max}({\rm Sym}^n E)}{n}, 
\]
where $L_{max}: = \max_{f:Y \to C}\{\mu_{max}(f^*E)\}$, where $f$ runs through all finite non-constant morphisms. It would be in particular interesting to have a purely vector bundle theoretical proof of this equality. Note that $L_{max}=\mu_{max}$ in characteristic zero, and $L_{max}=\max_{n}\{\mu_{max}(Fr_C^{n,*}E)\}$ in positive characteristic. 

\subsection{Acknowledgments}
We are grateful to Dr. Binggang Qu for his invaluable advice and support during the 2024 Algebra and Number Theory Summer School at Peking University. We also thank the organizers for bringing participants together and for their warm hospitality. The first author additionally thanks Prof. Jiu-Kang Yu for thoughtful comments. This work was largely written during the first author’s graduate study at The Chinese University of Hong Kong.

\section{Strongly canonical reduction}

\subsection{Vector bundles}
Let $E$ be a vector bundle on a projective smooth curve $C$. The \textit{degree} of $E$ is $\deg(E) := \deg(\det(E))$ and the \textit{slope} of $E$ is $\mu(E) := \deg(E)/\operatorname{rk}(E)$. It is called \textit{slope semistable} if for every subbundle $F$ of $E$, $\mu(F) \leq \mu(E)$. This is equivalent to $\mu(Q) \geq \mu(E)$ for every quotient bundle $Q$ of $E$.

There exists uniquely a filtration of subbundles $0=E_0 \subseteq E_1 \subseteq \cdots \subseteq E_r=E$ such that
	\begin{enumerate}
		\item $E_i/E_{i-1}$ is semistable;
		\item $\mu(E_i/E_{i-1}) > \mu(E_{i+1}/E_{i})$.
	\end{enumerate}
This filtration is called the \emph{Harder-Narasimhan filtration} of $E$. $\mu(E_1)$ is called \emph{maximal slope} of $E$ and is also denoted by $\mu_{\max}(E)$. This theory can be generalized for any reductive group $G$, whereas in this case $G=GL_n$ by taking frame bundles of vector bundles of rank $n$. 

\subsection{Principal bundles}
For any linear algebraic group $\Gamma/k$, let $X(\Gamma)=\mathrm{Hom}(\Gamma,\mathbb{G}_m)$ denote the character group of $\Gamma$. For a cocharacter $f \in X(\Gamma)^\vee$ and a character $\lambda \in X(\Gamma)$, we shall denote the pairing by $\langle f, \lambda \rangle$.
For any $\lambda \in X(\Gamma)$, denote by $k_{\lambda}$ the one-dimensional representation on the vector space $k$ with $\Gamma$ acting by $\lambda$.

A \emph{principal $\Gamma$-bundle} on $C$ is a $C$-scheme $F$ equipped with a right action of $\Gamma$ and a $\Gamma$-equivariant smooth morphism $F \longrightarrow C$ such that the map
 \begin{flalign*}
\quad\quad\quad &F \times_C (C \times \Gamma) \longrightarrow F \times_C F,\quad (f,(x,g)) \longmapsto (f,fg) &
 \end{flalign*} is an isomorphism. 

Attached to a principal $\Gamma$-bundle $F$, one has an associated cocharacter
\begin{flalign*}
 \quad\quad\quad & \deg(F): X(\Gamma) \longrightarrow \mathbb{Z},\quad \lambda \longmapsto \langle \deg(F),\lambda \rangle=\deg(F \times_\Gamma k_\lambda),&
\end{flalign*} where 
$\deg(F \times_\Gamma k_\lambda)$ is the degree of the line bundle $F\times_\Gamma k_\lambda$ on the curve $C$.

Let $H$ be a closed subgroup of a linear algebraic group $\Gamma$ over $k$. A \emph{reduction of structure group} of $F$ to $H$ is a pair $(F_H,\phi)$ where $F_H$ is a principal $H$-bundle and $\phi: F_H \times_H \Gamma \simeq F$ is an isomorphism.
The quotient $F/H$ parametrizes all reductions of $F$ to $H$, see \cite{SGA3}. In particular, the assignment sending a section $\sigma: C \longrightarrow F/H$ to the reduction $\sigma^*F$ of $F$ to $H$ is a one-one correspondence between sections of $F/H \longrightarrow C$ and reductions of structure group of $F$ to $H$. 

\subsection{Reductive groups, characters and cocharacters}
Let $G$ be a connected reductive group over $k$. Fix a Borel subgroup $B\subseteq G$ and a maximal torus $T\subseteq B$. Let $W$ be the Weyl group and $\Delta$ be the set of simple roots with respect to $(G,B,T)$. For any $\alpha\in\Delta$, we denote by $\alpha^\vee$ the corresponding simple coroot.

We shall consider only parabolic subgroups containing $B$. For such a parabolic subgroup $P$, let $W_P\subseteq W$ be the Weyl group $W(L_P)$ of the Levi factor $L_P\subseteq P$ and $\Delta_P\subseteq \Delta$ be the simple roots of $L_P$. Note that the natural inclusion 
\begin{flalign*}
	\quad \quad \quad &
	X(P) \longrightarrow X(L_P) \longrightarrow X(Z(L_P))
	&&
\end{flalign*} becomes an isomorphism after tensoring with $\mathbb{Q}$. Thus we have
\begin{flalign*}
\quad\quad\quad & X(T)_\mathbb{Q} \longrightarrow X(Z(L_P))_\mathbb{Q}=X(P)_\mathbb{Q} &
\end{flalign*}
and by taking duals, we get the so-called \emph{slope map} $X(P)^\vee_\mathbb{Q} \longrightarrow X(T)^\vee_\mathbb{Q}$ introduced in \cite[\textsection 2.1.3]{Schieder2015The}.  

\subsection{The strongly canonical reduction}
Let $G$ be a connected reductive group over $k$ and $F$ be a principal $G$-bundle over $C$.
We denote $F(G):=F\times_{G,{\rm int}} G$ the group scheme associated to $F$ by the action of $G$ on itself by inner automorphisms. 
Let $F_P$ be a reduction of $F$ to a parabolic subgroup $P$. Let $\deg(F_P) \in X(T)_\mathbb{Q}^\vee$ be the induced (rational) cocharacter. We also denote $F_P(P)=F_P\times_{P, {\rm int}}P$ the associated group scheme to $F_P$. 

The $G$-bundle $F$ is called \emph{semistable} if for any parabolic subgroup $P$, any reduction $F_P$ of $F$ to $P$ and any dominant character $\lambda$ of $P$ which is trivial on $Z(G)$, we have $\langle \deg(F_P),\lambda \rangle \leq 0$, or equivalently, if any parabolic subgroup scheme of $F(G)$ has non positive-degree. 

\begin{defn}
   We say $F$ is strongly semistable if for any non-constant finite morphism $f: C' \to C$, the pullback $f^*F$ is semistable. 
\end{defn}

\begin{rem}
    Semistability is preserved by separable pullbacks, that is, for a separable morphism $f: C' \to C$, the pullback $f^*(F_P)$ is semistable. However, this fact does not hold for inseparable morphism. 
\end{rem}

Among all filtrations of a vector bundle, there is a canonical one (the Harder-Narasimhan filtration). More generally, among all reduction of a principal $G$-bundles to parabolic subgroups, there is also a canonical one. The reduction $E_P$ of $E$ is called \emph{canonical} 
(or the Harder--Narasimhan filtration) if it satisfies 
the following conditions:

A reduction $F_P$ of $F$ to a parabolic subgroup $P$ is called \emph{canonical} if the following two conditions hold:
	\begin{enumerate}
		\item The the principal $L_P$ bundle $F_P \times_P L_P$ is semistable.
		\item For any non-trivial character $\lambda$ of $P$ which is non-negative linear combination of simple roots, $\langle \deg(F_P), \lambda \rangle>0$.
	\end{enumerate}

\begin{defn}\label{strongly}
        We say $F_P$ is strongly canonical if for any non-constant finite morphism $f: C' \to C$, the pullback $f^*(F_P)$ is canonical.
\end{defn}

\begin{rem}
Similarly as strongly Semistability, the notion of canonical reduction is preserved by separable pullbacks \cite{behrend2000semi}. In characteristic zero, canonical reduction is strong while this does not hold for inseparable morphism, which makes Definition \ref{strongly} necessary. 
\end{rem}

We will need the following theorem. 
\begin{thm}[\cite{1984RR}, Theorem 3.23]\label{RR}
    If $F$ is a strongly semistable $G$-bundle and $\rho :G\xrightarrow[]{} H$ is a homomorphism that maps the connected component of of the center of $G$ into that of $H$, then $F\times^{G}H$ is a semistable $H$-bundle. 
\end{thm}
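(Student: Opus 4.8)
The plan is to reduce the semistability of the associated bundle $F \times^G H$ to a statement about reductions to parabolic subgroups of $H$, and then pull those reductions back to reductions of $F$ to parabolic subgroups of $G$, where strong semistability can be applied. First I would recall the characterization of semistability of an $H$-bundle $E$: it is semistable if and only if for every parabolic $P_H \subseteq H$, every reduction $E_{P_H}$, and every dominant character $\chi$ of $P_H$ trivial on $Z(H)$, one has $\langle \deg(E_{P_H}), \chi \rangle \le 0$. So fix such a $P_H$ and a reduction $\sigma : C \to (F \times^G H)/P_H$ of $E = F\times^G H$. The key geometric input is that $(F\times^G H)/P_H \cong F \times^G (H/P_H)$, and $H/P_H$ is a projective $G$-variety (via $\rho$); the section $\sigma$ therefore corresponds to a $G$-equivariant map, but $\sigma$ need not come from a reduction of $F$ to a parabolic of $G$ directly.

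The heart of the argument is the following: after a finite base change $f : C' \to C$, one can refine $\sigma$ to a reduction of $f^*F$ to a parabolic subgroup $P_G$ of $G$ such that $\rho(P_G)$ lies in a conjugate of $P_H$ — this uses properness of the flag variety $H/P_H$ and the valuative criterion, together with the fact that the $G$-orbits on $H/P_H$ are locally closed so that a generic point of $C$ lands in one orbit $G/(G\cap gP_Hg^{-1})$, and $G\cap gP_Hg^{-1}$ is contained in a parabolic $P_G$ of $G$. Then for the character $\chi$ of $P_H$ one gets, via $\rho^*$, a character of $P_G$; one checks that because $\chi$ is trivial on $Z(H)$ and $\rho$ sends $Z(G)^\circ$ into $Z(H)^\circ$, the pulled-back character $\rho^*\chi$ is trivial on $Z(G)^\circ$, hence (a positive multiple of it is) a character of $P_G$ in the class to which strong semistability applies. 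Comparing degrees, $\langle \deg((f^*F)_{P_G}), \rho^*\chi \rangle = (\deg f)\cdot\langle \deg(E_{P_H}), \chi\rangle$ up to the orbit-codimension correction term, and since $f^*F$ is semistable by strong semistability of $F$, one concludes $\langle \deg(E_{P_H}), \chi\rangle \le 0$.

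I expect the main obstacle to be the orbit-stratification step: controlling precisely which parabolic $P_G$ of $G$ one obtains from the generic $G$-orbit on $H/P_H$, ensuring the reduction of $f^*F$ to $P_G$ actually exists over all of $C'$ (not just the generic point) — which is where the finite base change and the properness/valuative criterion are genuinely needed — and keeping track of the discrepancy between $\deg$ computed downstairs on $H/P_H$ and upstairs on $G/P_G$, since the embedding $G/(G\cap gP_Hg^{-1}) \hookrightarrow H/P_H$ is not an isomorphism and the line bundle $E_{P_H}\times^{P_H} k_\chi$ restricts to $(f^*F)_{P_G} \times^{P_G} k_{\rho^*\chi}$ only after identifying the associated characters correctly. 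Since this is precisely the content of \cite{1984RR}, Theorem 3.23, I would cite that reference for the full argument rather than reproduce it, noting only that strong semistability of $F$ is exactly what makes every $f^*F$ semistable so the cited proof goes through verbatim in positive characteristic.
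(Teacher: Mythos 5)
The paper does not give a proof of this statement at all: it is stated as a pure citation to Ramanan--Ramanathan (1984), Theorem~3.23, and is then invoked as a black box in the proof of Proposition~\ref{HN}. So there is no in-paper argument to compare your sketch against; the appropriate ``proof'' in the paper's context is the citation itself, which you also supply at the end.

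As to the sketch itself: it is a reasonable outline of one way the reduction could go, and you correctly identify where the real difficulty lies, but two of the steps you gloss over are not minor. First, the inclusion $G\cap gP_Hg^{-1}\subseteq P_G$ for some \emph{proper} parabolic $P_G$ of $G$ is not automatic; a stabilizer of a point in a non-closed $G$-orbit on $H/P_H$ need not lie in a proper parabolic, and when it does, producing a reduction of $f^*F$ to $P_G$ (rather than to the possibly non-parabolic stabilizer) introduces a further degree discrepancy. Second, the ``orbit-codimension correction term'' in the degree comparison is doing all the work and would need to be made precise before one could conclude $\langle \deg(E_{P_H}),\chi\rangle\le 0$; as stated, the inequality could go either way. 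These are precisely the places where strong semistability and the hypothesis $\rho(Z(G)^\circ)\subseteq Z(H)^\circ$ must enter nontrivially, and they are the technical substance of the cited theorem. Since you flag them as obstacles and then defer to \cite{1984RR}, your treatment is functionally equivalent to the paper's (cite and use), but the sketch should not be read as a self-contained proof.
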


\section{Height filtrations and successive minima} \label{section_ ht fil and succ min}

Let $C$ be a curve over a field $k$ of positive characteristic and $K$ be its function field. Let $G$ be a connected reductive group over $k$. Let $P \subseteq G$ be a parabolic subgroup and $\lambda: P \longrightarrow \mathbb{G}_m$ be a strictly antidominant character. Let $F$ be a principal $G$-bundle over $C$. Let $\mathcal{X}=F/P$ and $X=(F/P)_K$. Let $\mathcal{L}_\lambda = F \times_P k_\lambda$ and $h_{\mathcal{L}_\lambda}: X(\overline{K}) \longrightarrow \mathbb{R}$ be the induced height function.

\[ \xymatrix{
X=(F/P)_K \ar[d] \ar[r] & \mathcal{X}=F/P \ar[d] \\
\operatorname{Spec}(K) \ar[r] & C
} \]

Let $F_Q$ be the canonical reduction of $F$ to $Q$, and $\deg(F_Q) \in X(T)^\vee_\mathbb{Q}$ be the degree cocharacter. Let $W,W_P$ and $W_Q$ be the Weyl group of $G,L_P$ and $L_Q$. For $w \in W_Q \backslash W/W_P$, let $C_w=(F_Q \times_Q QwP/P)_K \subseteq X$ be the corresponding Schubert cell in $X$. We have the following theorem. 

\begin{thm}\label{charp}
Assume $F$ admits strongly canonical reduction. For any $t \in \mathbb{R}$, let $Z_t \subseteq X$ be the Zariski closure of the set $\big\{ x \in X(\overline{K}): h_{\mathcal{L}_\lambda}(x) < t \big\}$. Then
	\begin{flalign*}
    \quad \quad \quad &
    Z_t = X \Bigg\backslash \coprod_{\langle \deg (F_{Q }), w\lambda \rangle\geq t} C_w = \coprod_{\langle \deg (F_{Q }), w\lambda \rangle<t} C_w.
    &
  \end{flalign*} \label{thm of ht filtration}
In particular, the successive minima are $\zeta_{w}=\left\langle\operatorname{deg}\left(F_{Q}\right), w \lambda\right\rangle$ and Zhang's successive minima are $e_{i}=\min \left\{\zeta_{w}: \ell(w)=\operatorname{dim} G / P-i+1\right\}$ where $$\ell(w)=\max _{\sigma \in W_{Q} w W_{P}} \min _{\tau \in \sigma W_{P}} \ell(\tau).$$
\end{thm}

For general $G$-bundle $F$ which does not necessarily admits a strongly canonical reduction, consider the following Cartesian diagram for $n$ sufficiently large. 
 \[\begin{tikzcd}
	(\operatorname{Fr_C}^n)^* F /P & F/P \\
	C & C
	\arrow["\phi",from=1-1, to=1-2]
	\arrow[from=1-1, to=2-1]
	\arrow[from=1-2, to=2-2]
	\arrow["\operatorname{Fr}_C^n" ,from=2-1, to=2-2]
\end{tikzcd}\]
with its generic fiber 
 \[\begin{tikzcd}
	\Tilde{X}=((\operatorname{Fr_K}^n)^* F /P)_K & X=(F/P)_K \\
	\spec K & \spec K
	\arrow["\phi_K",from=1-1, to=1-2]
	\arrow[from=1-1, to=2-1]
	\arrow[from=1-2, to=2-2]
	\arrow["\operatorname{Fr}_C^n" ,from=2-1, to=2-2]
\end{tikzcd}\]

\begin{thm}\label{twist}
    The height filtration of $h_{\mathcal{L}_{\lambda}}$ on $X$ is given by the image of the height filtration of the height filtration of $\Tilde{X}$, the successive minima are $\frac{1}{p^n}$ of the successive minima of $\Tilde{X}$.

\end{thm}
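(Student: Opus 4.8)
The plan is to reduce Theorem \ref{twist} to Theorem \ref{charp} applied to the $G$-bundle $(\operatorname{Fr}_C^n)^*F$, using that $\phi_K\colon \widetilde X \to X$ is a universal homeomorphism that is compatible with the line bundles and multiplies heights by $p^n$. First I would record the key numerical input: because $\operatorname{Fr}_C^n\colon C\to C$ is finite of degree $p^n$, for any closed point $x\in X(\overline K)$ with preimage $\tilde x\in\widetilde X(\overline K)$ one has $\phi^*\mathcal L_\lambda \simeq \mathcal L_{\lambda}$ on $(\operatorname{Fr}_C^n)^*F/P$ (the associated bundle construction commutes with pullback of the principal bundle), and the closure $\overline{\{\tilde x\}}$ maps finitely onto $\overline{\{x\}}$; comparing intersection numbers on $\mathcal X$ and $(\operatorname{Fr}_C^n)^*\mathcal X = \mathcal X\times_{C,\operatorname{Fr}_C^n}C$ via the projection formula gives $\mathcal L_\lambda\cdot\overline{\{x\}} = p^{-n}\,(\phi^*\mathcal L_\lambda\cdot\overline{\{\tilde x\}})$ after accounting for degrees, hence $h_{\mathcal L_\lambda}(x) = p^{-n}\,h_{\mathcal L_\lambda}(\tilde x)$ for the corresponding point. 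Since $\phi_K$ is a bijection on $\overline K$-points (it is a composite of relative Frobenius twists, which are universal homeomorphisms on the fibers over $\operatorname{Spec} K$), this identifies the two height functions up to the scalar $p^n$.

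Next I would transfer the height filtration. For $t\in\mathbb R$, the set $\{x\in X(\overline K): h_{\mathcal L_\lambda}(x)<t\}$ corresponds under $\phi_K$ bijectively to $\{\tilde x\in\widetilde X(\overline K): h_{\mathcal L_\lambda}(\tilde x)<p^n t\}$; because $\phi_K$ is a homeomorphism it commutes with taking Zariski closure, so $Z_t(X) = \phi_K\big(Z_{p^n t}(\widetilde X)\big)$. Then I would invoke Theorem \ref{charp} for the $G$-bundle $\operatorname{Fr}_C^{n,*}F$ — legitimate precisely because $n$ is chosen (via Langer's theorem, as recalled in the introduction) so that $\operatorname{Fr}_C^{n,*}F$ admits a strongly canonical reduction — to get $Z_{p^nt}(\widetilde X) = \coprod_{\langle \deg(\widetilde F_Q), w\lambda\rangle < p^n t} C_w$, and push forward along $\phi_K$. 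The successive minima of $\widetilde X$ being the values $\langle\deg(\widetilde F_Q),w\lambda\rangle$, the successive minima of $X$ are exactly $p^{-n}$ times these, which is the asserted statement. (As a sanity check one notes $\deg\big((\operatorname{Fr}_C^n)^*F\times_G k_\mu\big) = p^n\deg(F\times_G k_\mu)$, so even when $F$ itself had a strongly canonical reduction the scaling by $p^n$ is consistent with Theorem \ref{charp} applied directly to $X$.)

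The main obstacle I anticipate is the first step — pinning down the exact relationship $h_{\mathcal L_\lambda}\circ\phi_K = p^n\, h_{\mathcal L_\lambda}$ with the correct normalization. One has to be careful about two competing factors of $p^n$: the degree $[\kappa(x'):\kappa(x)]$ of residue field extensions (Frobenius on function fields is purely inseparable of degree $p$) entering the denominator $\deg(x)$ in the definition of $h$, versus the multiplicity with which $\operatorname{Fr}_C^n$ contracts the base curve, entering the intersection number in the numerator. Working on a model: $(\operatorname{Fr}_C^n)^*\mathcal X \to C$ is the base change of $\mathcal X\to C$ along a degree-$p^n$ map, $\phi$ is the projection, $\phi^*\mathcal L_\lambda = \mathcal L_\lambda$ on the total space, and for a horizontal curve $\overline{\{x\}}\subseteq\mathcal X$ the proper transform $\overline{\{\tilde x\}}$ has $\phi_*\overline{\{\tilde x\}} = \overline{\{x\}}$ as cycles (the map is degree one on each such horizontal curve since $\phi_K$ is a homeomorphism). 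Then the projection formula gives $\mathcal L_\lambda\cdot\overline{\{\tilde x\}} = \mathcal L_\lambda\cdot\overline{\{x\}}$ on the nose, while $\deg(\tilde x) = p^n\deg(x)$ as points over $\operatorname{Spec}K$, yielding $h_{\mathcal L_\lambda}(\tilde x) = p^{-n} h_{\mathcal L_\lambda}(x)$ — i.e.\ $h$ on $\widetilde X$ is $p^{-n}$ times $h$ on $X$, equivalently $h_X = p^n h_{\widetilde X}\circ\phi_K^{-1}$. Once this normalization is nailed down, the rest is a formal consequence of Theorem \ref{charp} and the homeomorphism property of $\phi_K$, and I would spell out the statement of Theorem \ref{twist} cleanly: $Z_t(X) = \phi_K(Z_{p^n t}(\widetilde X))$ and $\zeta_w(X) = p^{-n}\zeta_w(\widetilde X)$ for each double coset $w\in W_Q\backslash W/W_P$ (with $Q$ now the canonical parabolic of $\operatorname{Fr}_C^{n,*}F$).
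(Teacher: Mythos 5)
Your overall strategy matches the paper's: invoke Langer's theorem to pass to a Frobenius twist $\widetilde X = ((\operatorname{Fr}_C^n)^*F/P)_K$ admitting a strongly canonical reduction, establish a scaling identity between the heights on $\widetilde X$ and $X$, and then push the filtration from Theorem \ref{charp} along the homeomorphism $\phi_K$. The paper's proof is terse — it simply records the identity $\tfrac{1}{p^n}h_{\phi^*\mathcal L_\lambda}(\tilde x) = h_{\mathcal L_\lambda}(\phi_K(\tilde x))$ and says "our theorem follows" — and your first two paragraphs and your closing summary reproduce exactly this reduction with the correct normalization $h_X = p^{-n}\bigl(h_{\widetilde X}\circ\phi_K^{-1}\bigr)$.

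However, the third paragraph, which you single out as the place where the normalization must be ``nailed down,'' derives the \emph{opposite} identity and you do not notice the inconsistency. Concretely, you assert (i) $\phi_*\overline{\{\tilde x\}} = \overline{\{x\}}$ (``the map is degree one on each such horizontal curve since $\phi_K$ is a homeomorphism'') and (ii) $\deg(\tilde x) = p^n\deg(x)$. Claim (i) is false: being a universal homeomorphism does not force degree $1$ on cycles — Frobenius is the standard counterexample — and for a $K$-rational point one finds $\kappa(x)\to\kappa(\tilde x)$ is essentially $\operatorname{Fr}^n_K$, so $\phi_*\overline{\{\tilde x\}} = p^n\,\overline{\{x\}}$. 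Claim (ii) is also backwards: with the structure map of $\widetilde X$ to its own $\operatorname{Spec}K$, one has $\deg(\tilde x)=\deg(x)$ (in the $K$-rational case $\kappa(\tilde x)\cong K$). These two errors do not cancel but compound, yielding $h_{\widetilde X}(\tilde x)=p^{-n}h_X(x)$ — i.e.\ $h_X = p^{n}\bigl(h_{\widetilde X}\circ\phi_K^{-1}\bigr)$ — which contradicts your own first paragraph, your ``sanity check'' via $\deg((\operatorname{Fr}_C^n)^*F\times_Gk_\mu)=p^n\deg(F\times_Gk_\mu)$, and the theorem itself. You should redo the verification: $\phi^*\mathcal L_\lambda\cdot\overline{\{\tilde x\}}=\mathcal L_\lambda\cdot\phi_*\overline{\{\tilde x\}}=p^n\,\mathcal L_\lambda\cdot\overline{\{x\}}$ with $\deg(\tilde x)=\deg(x)$, giving $h_{\widetilde X}(\tilde x)=p^n h_X(x)$, which is the paper's formula and feeds correctly into the rest of your argument.
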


\subsection{A height lower bound in Schubert cells}\label{3.1}

In subsection \ref{3.1}, we follow \cite{fan2024arakelovgeometryflagvarieties} since there is no difference between characteristic zero and characteristic $p$. For $w \in W_{Q} \backslash W / W_{P}$, write $\mathcal{C}_{w}=F_{Q} \times_{Q} Q w P / P, \mathcal{X}_{w}=F_{Q} \times_{Q} \overline{Q w P} / P, C_{w}=\mathcal{C}_{w, K}$ and $X_{w}=\mathcal{X}_{w, K}$.

Let $F_Q$ be the canonical reduction of $F$ with canonical parabolic $Q$, and $\deg(F_Q) \in X(T)^\vee_\mathbb{Q}$ be the degree cocharacter. Let $W,W_P$ and $W_Q$ be the Weyl groups of $G,L_P$ and $L_Q$. For $w \in W_{Q } \backslash W / W_{P }$, write $\mathcal{C}_w=F_{Q } \times_{Q } QwP/P$, $\mathcal{X}_w=F_{Q } \times_{Q } \overline{QwP}/P$, $C_w=\mathcal{C}_{w,K}$ and $X_w = \mathcal{X}_{w,K}$.

Note that for any $w^{\prime} \in W_{Q}$ and $\lambda \in X(T)$, we have $w^{\prime} \lambda-\lambda \in \mathbb{Z}\left[\Delta_{Q}\right]$ and consequently $\left\langle\operatorname{deg}\left(F_{Q}\right), w^{\prime} \lambda\right\rangle=\left\langle\operatorname{deg}\left(F_{Q}\right), \lambda\right\rangle$. Note also that for any $\lambda \in X(P)$ and $w \in W_{P}, w \lambda=\lambda$. So, the number $\left\langle\operatorname{deg}\left(F_{Q}\right), w \lambda\right\rangle$ is well-defined for any $\lambda \in X(P)$ and $w \in W_{Q} \backslash W / W_{P}$.

\begin{prop}
    For any $x \in C_{w}(\bar{K}), h_{\mathcal{L}_{\lambda}}(x) \geq\left\langle\operatorname{deg} F_{Q}, w \lambda\right\rangle$.
\end{prop}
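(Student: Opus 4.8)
The plan is to base-change to a finite cover of $C$, reducing the height bound to a degree inequality for line bundles on a smooth curve, and then to extract that inequality from the position of $wP$ in $G/P$ together with the semistability built into the canonical reduction.

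\textbf{Step 1 (reduce to a section over a cover).} A point $x\in C_w(\overline K)$ is defined over a finite extension $K'/K$; let $f\colon C'\to C$ be the normalization of $C$ in $K'$ and set $F'=f^*F$, $F'_Q=f^*F_Q$, $\mathcal X'=\mathcal X\times_C C'$, $\mathcal L'=f^*\mathcal L_\lambda$. Properness of $\mathcal X/C$ extends $x$ to a section $\sigma\colon C'\to\mathcal X'$, and the projection formula gives
\[
h_{\mathcal L_\lambda}(x)=\tfrac{1}{[K':K]}\deg_{C'}\!\big(\sigma^*\mathcal L'\big),\qquad \langle\deg F_Q,w\lambda\rangle=\tfrac{1}{[K':K]}\langle\deg F'_Q,w\lambda\rangle ,
\]
so it suffices to prove $\deg_{C'}(\sigma^*\mathcal L')\ge\langle\deg F'_Q,w\lambda\rangle$.

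\textbf{Step 2 (recognize the restriction of $\mathcal L_\lambda$).} Because $x\in C_w$, the generic point of $C'$ lands in the open cell, so $\sigma$ factors through the closed subbundle $\mathcal X_{w,C'}=F'_Q\times_Q\overline{QwP}/P$. Write $R_w=\operatorname{Stab}_Q(wP)=Q\cap wPw^{-1}$, so that $QwP/P\cong Q/R_w$ is the open $Q$-orbit in $\overline{QwP}/P$ and $\mathscr L_\lambda=G\times_P k_\lambda$ restricts on it to the $Q$-equivariant line bundle $Q\times_{R_w}k_{w\lambda}$. Take $w$ to be the minimal-length representative of its class in $W_Q\backslash W/W_P$, and let $\overline{R}_w\supseteq R_w$ be the parabolic subgroup of $Q$ with $\Delta_{\overline{R}_w}=\{\alpha\in\Delta_Q:\langle w\lambda,\alpha^\vee\rangle=0\}$; then $w\lambda$ extends to a character of $\overline{R}_w$, and the $\overline{R}_w$-reduction of $F'_Q$ defined generically by $\sigma$ extends over all of $C'$ since $Q/\overline{R}_w$ is proper. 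Call this extension $F'_{\overline{R}_w}$. Comparing line bundles on $C'$ that agree where $\sigma$ stays in the cell yields $\sigma^*\mathcal L'=(F'_{\overline{R}_w}\times_{\overline{R}_w}k_{w\lambda})(D)$ with $D$ supported on the finitely many points where $\sigma$ meets the boundary of $\mathcal C_{w,C'}$. A fibrewise local analysis — using relative ampleness of $\mathcal L_\lambda$ and the fact that an anti-dominant cocharacter of $Z(L_Q)$ contracts $QwP/P$ onto the closed $Q$-orbit with non-negative weights on $\mathscr L_\lambda$ — shows $D\ge 0$, hence $\deg_{C'}(\sigma^*\mathcal L')\ge\langle\deg F'_{\overline{R}_w},w\lambda\rangle$.

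\textbf{Step 3 (the combinatorial and semistability inequality).} It remains to show $\langle\deg F'_{\overline{R}_w},w\lambda\rangle\ge\langle\deg F'_Q,w\lambda\rangle$. Decompose $w\lambda=\mu_c+\mu_a$ with $\mu_c\in X(Q)_{\mathbb Q}$ the image of $w\lambda$ under $X(T)_{\mathbb Q}\to X(Q)_{\mathbb Q}=X(Z(L_Q))_{\mathbb Q}$ and $\mu_a\in\sum_{\alpha\in\Delta_Q}\mathbb Q\,\alpha$. Since $\mu_c$ is a character of $Q$ the two sides have the same $\mu_c$-component, and since $\deg F'_Q$ pairs trivially with $\Delta_Q$ the claim reduces to $\langle\deg F'_{\overline{R}_w},\mu_a\rangle\ge 0$. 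Minimality of $w$ gives $w^{-1}\Delta_Q\subseteq\Phi^+$, hence $\langle w\lambda,\alpha^\vee\rangle=\langle\lambda,(w^{-1}\alpha)^\vee\rangle\le 0$ for $\alpha\in\Delta_Q$ by anti-dominance of $\lambda$; thus $-\mu_a$ is a dominant character of the parabolic $\overline{R}_w\cap L_Q$ of $L_Q$, trivial on $Z(L_Q)$. As $F_Q$ is canonical, $F_Q\times_Q L_Q$ is semistable, and the identical argument applies in positive characteristic provided $F_Q$ is strongly canonical, so that $F'_Q\times_Q L_Q$ is again semistable; applying the definition of semistability to the reduction of $F'_Q\times_Q L_Q$ induced by $F'_{\overline{R}_w}$ and to the character $-\mu_a$ gives $\langle\deg F'_{\overline{R}_w},-\mu_a\rangle\le 0$, which is precisely the required inequality.

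\textbf{The main obstacle.} The delicate step is the effectivity of the correction divisor $D$ in Step 2 — the assertion that sliding the section out of the open Schubert cell into its boundary can only raise $\deg\sigma^*\mathcal L_\lambda$. This is a fibrewise positivity statement that must be teased out of the relative ampleness of $\mathcal L_\lambda$ and the $\mathbb G_m$-action contracting $\overline{QwP}/P$ onto its closed $Q$-orbit; everything else is formal, and once the semistability of $F_Q\times_Q L_Q$ is granted (automatic in characteristic $0$, and guaranteed by strong canonicity in characteristic $p$) the argument is insensitive to the characteristic.
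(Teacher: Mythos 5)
The paper does not actually give a proof here: it simply invokes \cite{fan2024arakelovgeometryflagvarieties} and asserts that the argument there carries over verbatim to positive characteristic. Your proposal is therefore doing genuinely more work, and it contains a real gap at exactly the step you yourself flag.

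\textbf{The gap in Step 2.} The effectivity of $D$ is not a formality, and the sketch you give does not establish it. You write that the comparison morphism of line bundles away from the boundary has nonnegative order of vanishing because ``an anti-dominant cocharacter of $Z(L_Q)$ contracts $QwP/P$ onto the closed $Q$-orbit with non-negative weights.'' But a rational isomorphism between two line bundles on $\operatorname{Spec} R$ (with $R$ a DVR) can a priori have a pole as well as a zero, and relative ampleness of $\mathcal{L}_\lambda$ alone does not decide the sign. What actually makes $D\geq 0$ true is the existence of the $\overline R_w$-semi-invariant section $s_{w\lambda}\in H^0(\overline{QwP}/P,M_\lambda)$ (the extremal weight vector of the Demazure module) which is \emph{regular} on all of $\overline{QwP}/P$ and whose zero divisor is exactly the boundary; it is this global section, not a contraction argument, that identifies your comparison map with $\sigma^*(s_{w\lambda})$ and hence makes its divisor effective. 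That input is missing, and even once one invokes it, one still has to check that the resulting morphism of line bundles is $Q$-equivariant enough to descend after twisting by $F'_Q$ — a point your construction via $\overline R_w$-reductions elides. Without this, Step 2 is a heuristic, not a proof.

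\textbf{A secondary issue in Step 3.} You invoke semistability of the $L_Q$-bundle $F'_Q\times_Q L_Q$ \emph{over the cover} $C'$, which in positive characteristic requires strong canonicity of $F_Q$ (as you note). But the proposition as stated in the paper assumes only that $F_Q$ is the canonical reduction, and the paper explicitly claims that the characteristic-zero proof ``does not require $\mathrm{char}(k)=0$.'' So either your route is imposing a hypothesis the statement does not carry, or you need to explain why the semistability you use survives pullback along possibly inseparable covers (e.g.\ because the character $-\mu_a$ is fixed in advance, or because the relevant degree inequality can be checked on $C$ rather than $C'$). As written, this does not match the scope of the statement you are proving.
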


We follow the argument in \cite{fan2024arakelovgeometryflagvarieties}, since many techniques do not require the characteristic.

For $  x \in X(K) $, let $ \sigma_x : C \to F/P $ be the section induced by $ x : \operatorname{Spec}(K) \to X $ through the valuative criterion. Let $ F_{P,x} = \sigma_x^*F $ be the corresponding reduction to $ P $.

\begin{defn}
    A reduction $ F_P $ of $ F $ to $ P $ is called \textit{in relative position} $ w \in W_Q \setminus W/W_P $ with respect to $ F_Q $ if the image of the natural map
\[
F_Q \times_C F_P \to G_C, \quad (a,b) \mapsto a^{-1}b
\]
lies in $ QwP_C $.

This definition coincides with the definition in \cite[$\S$4.1]{Schieder2015The}.
\end{defn}

The following lemmas give  the height of a rational point and the relation between field extensions.

\begin{lem}
    \begin{enumerate}
        \item[(1)]  For $ x \in X(K) $, $ h_{\mathcal{L}_\lambda}(x) = \langle 
        \deg(F_{P,x}), \lambda \rangle $.

        \item[(2)] If $x \in C_w(K)$, $F_{P,x}$ is of relative position $w$ with respect to $F_Q$.

        \item[(3)] If $F_P$ is in relative position $w$ with respect to $F_Q$, then $ \langle \deg(F_P), \lambda \rangle \geq \langle \deg(F_Q), w\lambda \rangle $ for any antidominant character $ \lambda $.
    \end{enumerate}
\end{lem}

\begin{proof}
    \begin{enumerate}
        \item[(1)] By definition, $ h_{\mathcal{L}_\lambda}(x) $ is the degree of $ \sigma_x^*(\mathcal{L}_\lambda) $ and $ \langle \deg(F_{P,x}), \lambda \rangle $ is the degree of $ F_{P,x} \times_P k_\lambda $. 
        Then it follows from the equality
\[
\sigma_x^*(\mathcal{L}_\lambda) = \sigma_x^*(F \times_P k_\lambda) = (\sigma_x^*F) \times_P k_\lambda = F_{P,x} \times_P k_\lambda
\]

\item[(2)] We have a commutative diagram\[
\begin{array}{ccc}
\sigma_x^* F & \longrightarrow & F = \coprod F_Q \times_Q QwP \\
\downarrow & & \downarrow \\
C & \stackrel{\sigma_x}{\longrightarrow} & F/P = \coprod F_Q \times_Q QwP/P.
\end{array}
\] So, the image lies in $F_Q \times_Q QwP$. The map $F_Q \times_C F_{P,x} \to C \times G$ factors as 
$ F_Q \times_C F_{P,x} \to F_Q \times_C (F_Q \times_Q QwP)\to (F_Q \times_C F_Q) \times_Q QwP $ and $(F_Q \times_C F_Q) \times_Q QwP \to (C \times_Q Q) \times_Q QwP = C \times_Q QwP$.

\item[(3)] See \cite[Theorem4.1]{Schieder2015The}.  
\end{enumerate}
\end{proof}

\begin{proof}[Proof of proposition 3.3]
For any finite extension $L/K$ and $x\in C_w(L)$, we define 
\begin{itemize}
    \item $C_L$: normalization of $C$ in $L$
    \item $X_L=X\times_K L$
    \item $F_{C_L}=F\times_C C_L$
    \item $\Tilde{x}$: the corresponding morphism $\spec L\to X_L$, and define $\sigma_{\Tilde{x}}$ similarly.
    \item $\mathcal{L}_{C_L,\lambda}$: the pullback of $\mathcal{L}_\lambda$.

\end{itemize}

By the above lemma(1), \[
    h_{\mathcal{L}_\lambda}(x)=\frac{1}{[L:K]}\deg \sigma_{\Tilde{x}}^*\mathcal{L}_\lambda=\left\langle \deg(F_{C_L,P,\Tilde{x}}),\lambda \right\rangle 
    \]

    Since $F$ admits strongly canonical reduction, the (strongly) canonical reduction of $F_{C_L}$ is $F_{C_L,Q}=F_Q\times_C C_L$ and $\deg(F_{C_L,Q})=[L:K]\deg (F_Q)$ in $X(T)^{\vee}_{\mathbb{Q}}$. Then the proposition follows from the above lemma (2)(3).
\end{proof}

\subsection{Height filtration and successive minima}
For any $t \in \mathbb{R}$, let $Z_t \subseteq X$ be the Zariski closure of the set $\big\{ x \in X(\overline{K}): h_{\mathcal{L}_\lambda}(x)<t \big\}$. Let $\zeta_{\operatorname{ess}}(X) := \inf \big\{ t: Z_t = X \big\}$ be the \emph{essential minimum} of $h_{\mathcal{L}_\lambda}$ on $X$.

Note that $\mathcal{X}_w$ is a closed subscheme of $\mathcal{X}$, so $\mathcal{L}_\lambda|_{\mathcal{X}_w}$ induces a height function $h_{\mathcal{L}_\lambda}: X_w(\overline{K}) \longrightarrow \mathbb{R}$, which is nothing but the restriction of $h_{\mathcal{L}_\lambda}: X(\overline{K}) \longrightarrow \mathbb{R}$ to $X_w(\overline{K})$. Let $\zeta_{\operatorname{ess}}(X_w)$ be the essential minimum of $h_{\mathcal{L}_\lambda}$ on $X_w$.
Now we compute the \textit{essential minimum} $\zeta_{\rm ess}(X_w)$ of $X_w$. 
\begin{lem}[\cite{jantzen1996representations}, Part I, §5.18]
    On $\mathcal{X}_{w}=F_{Q} \times_{Q} \overline{Q w P} / P \subseteq \mathcal{X}$, we have $$\pi_{*}\left(\left.\mathcal{L}_{\lambda}\right|_{\mathcal{X}_{w}}\right)=F_{Q} \times_{Q} \mathrm{H}^{0}\left(\overline{Q w P} / P, M_{\lambda}\right).$$ 
\end{lem}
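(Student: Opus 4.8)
The plan is to trivialize the $Q$-torsor $F_Q\to C$, compute the pushforward over the trivialized base, and then descend. Write $Y:=\overline{QwP}/P$, a projective scheme stable under the $Q$-action on $G/P$, and let $M_\lambda$ denote the $Q$-linearized line bundle on $Y$ attached to $\lambda$, i.e.\ the restriction to $Y$ of the $G$-linearized line bundle $G\times_P k_\lambda$ on $G/P$. Since $F=F_Q\times_Q G$ we have $\mathcal{X}=F/P=F_Q\times_Q(G/P)$ and $\mathcal{L}_\lambda=F_Q\times_Q(G\times_P k_\lambda)$, hence $\mathcal{X}_w=F_Q\times_Q Y$ and $\mathcal{L}_\lambda|_{\mathcal{X}_w}=F_Q\times_Q M_\lambda$, compatibly with the $Q$-linearizations. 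Let $\pi\colon\mathcal{X}_w\to C$ be the (proper) structure map and $\rho\colon F_Q\to C$ the torsor projection.

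The main step is to pull back along $\rho$ and apply flat base change twice. Since $\pi$ is quasi-compact and quasi-separated and $\rho$ is faithfully flat, flat base change for quasi-coherent pushforward gives $\rho^*\pi_*(\mathcal{L}_\lambda|_{\mathcal{X}_w})\cong\pi'_*\bigl(\rho'^*(\mathcal{L}_\lambda|_{\mathcal{X}_w})\bigr)$, where $\pi'$ is the projection from $\mathcal{X}_w\times_C F_Q$ to $F_Q$. A torsor pulled back along itself is trivial, so $\mathcal{X}_w\times_C F_Q\cong F_Q\times Y$, and under this isomorphism the line bundle becomes $\mathrm{pr}_Y^*M_\lambda$; thus $\rho^*\pi_*(\mathcal{L}_\lambda|_{\mathcal{X}_w})\cong(\mathrm{pr}_{F_Q})_*\mathrm{pr}_Y^*M_\lambda$. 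Applying flat base change a second time to the Cartesian square realizing $F_Q\times Y\to F_Q$ as a base change of $Y\to\Spec k$, the right-hand side becomes $\mathcal{O}_{F_Q}\otimes_k H^0(Y,M_\lambda)$, a free $\mathcal{O}_{F_Q}$-module of finite rank since $Y$ is projective and $M_\lambda$ a line bundle. (Equivalently, one may run the argument over an \'etale cover $\{U_i\to C\}$ trivializing $F_Q$ and glue.)

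It then remains to descend the isomorphism $\rho^*\pi_*(\mathcal{L}_\lambda|_{\mathcal{X}_w})\cong\mathcal{O}_{F_Q}\otimes_k H^0(Y,M_\lambda)$ along $\rho$. The left-hand side carries its canonical descent datum; I would equip the right-hand side with the descent datum in which $Q$ acts on $\mathcal{O}_{F_Q}$ through $\rho$ and on $H^0(Y,M_\lambda)$ through the representation induced by the $Q$-linearization of $M_\lambda$, and check that the two data agree by comparing the two pullbacks of $F_Q\times Y$ to $F_Q\times Q\times Y$ — this uses precisely that the trivialization $\mathcal{X}_w\times_C F_Q\cong F_Q\times Y$ and the linearization of $M_\lambda$ were built compatibly. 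Faithfully flat descent then shows the left-hand side descends to $\pi_*(\mathcal{L}_\lambda|_{\mathcal{X}_w})$ and the right-hand side to $F_Q\times_Q H^0(Y,M_\lambda)$ by the very definition of the associated-bundle construction, which is the assertion (recovering \cite[Part I, \S5.18]{jantzen1996representations}).

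I expect the only delicate point to be this last compatibility check: making sure the descent datum on $\mathcal{O}_{F_Q}\otimes_k H^0(Y,M_\lambda)$ really is the one defining $F_Q\times_Q H^0(Y,M_\lambda)$ and not a twist of it. Everything else is formal — we push forward only in degree zero, so no cohomology-and-base-change hypotheses enter, and the statement says nothing about $\dim_k H^0(Y,M_\lambda)$ or about higher cohomology, so the characteristic-$p$ pathologies of Schubert varieties (failure of semistability for symmetric powers and the like) play no role here; they will matter only later, for the actual computation of the slopes.
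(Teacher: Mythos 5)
Your argument is correct, and note that the paper provides no proof of its own here: the lemma is stated with a bare citation to Jantzen I.5.18, which treats the analogous statement for the fibration $G/H \to G/H'$ rather than for an arbitrary $Q$-torsor $F_Q \to C$. Your proof is essentially the standard fppf-descent reconstruction of that cited result, adapted to the torsor setting. The chain is sound: $\rho \colon F_Q \to C$ is faithfully flat (being a $Q$-torsor, fppf-locally trivial), so flat base change for quasi-compact, quasi-separated $\pi$ applies; the pullback $\mathcal{X}_w \times_C F_Q \cong F_Q \times Y$ comes from the canonical trivialization of a torsor pulled back along itself, and under it $\rho'^*(\mathcal{L}_\lambda|_{\mathcal{X}_w})$ becomes $\mathrm{pr}_Y^* M_\lambda$ precisely because $\mathcal{L}_\lambda|_{\mathcal{X}_w} = F_Q \times_Q M_\lambda$ as $Q$-equivariant sheaves (which in turn needs the identification $(F_Q \times_Q G)\times_P k_\lambda \cong F_Q \times_Q (G\times_P k_\lambda)$, valid since the $Q$- and $P$-actions on $F_Q \times G \times k_\lambda$ commute); the second base change over $\Spec k$ gives $\mathcal{O}_{F_Q}\otimes_k H^0(Y,M_\lambda)$. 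You correctly identify the only step that requires actual verification: that the canonical descent datum on $\rho^*\pi_*(\mathcal{L}_\lambda|_{\mathcal{X}_w})$ agrees with the one on $\mathcal{O}_{F_Q}\otimes_k H^0(Y,M_\lambda)$ coming from the $Q$-representation $H^0(Y,M_\lambda)$. This is the content of the functoriality of the trivialization $F_Q\times_C F_Q \cong F_Q\times Q$ together with $Q$-equivariance of $M_\lambda$, and while you do not write out the cocycle check on $F_Q\times Q\times Y$, you flag it honestly and it does go through. One cosmetic point: you write ``\'etale cover'' as an alternative; for a general reductive $Q$ the torsor need not be Zariski- or even \'etale-locally trivial, so the robust phrasing is fppf (which is all that your base-change and descent steps require).
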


Let $V$ be any $Q$-representation and let $V=\bigoplus_\nu V[\nu]$ be its weight decomposition. We define a filtration $V_{\bullet}$ on the vector space $V$:
For any rational number $q \in \mathbb{Q}$, we define the subspace $V_q$ as the sum of weight spaces
$$
V_q:=\bigoplus_{\left\langle\deg F_{Q}, \nu\right\rangle \geq q} V[\nu]
$$
Clearly, $V_{q^{\prime}} \subseteq V_q$ whenever $q^{\prime} \geq q$. We will consider subspaces $V_q$ only for the finitely many $q \in \mathbb{Q}$ where a jump occurs, that is, only for those $q$ such that $V_{q^{\prime}} \subsetneq V_q$ for all $q^{\prime}>q$. Let $q_0$ be the smallest and $q_1$ the largest rational number occurring among such $q$. Then $V_{q_1}$ is the smallest non-zero filtration step and $V_{q_0}$ equals $V$.

Then, by twisting the $Q$-subrepresentations $V_q$ above by $F_Q$, we obtain a filtration $V_{\bullet, F_Q}:=F_Q \times_{Q} V_\bullet$ of the vector bundle $V_{F_Q} =F_{Q}\times_{Q}V$ by subbundles

$$
0 \neq V_{q_1, F_Q} \subsetneq \cdots \subsetneq V_{q, F_Q} \subsetneq \cdots \subsetneq V_{q_0, F_Q}=V_{F_Q}.
$$

   Improving a result of Schieder\footnote{The case of characteristic \(0\) and a specific case of positive characteristic were proven in \cite{Schieder2015The}}, we proved that:

   \begin{prop}\label{HN}
       Assume the redution $F_Q$ is the strongly\footnote{Proposition \ref{HN} fails to hold if we do not require the notion of strongly canonical reduction.} canonical reduction, then the  filtration $V_{\bullet, F_Q}$ of the vector bundle $V_{F_G}$ is the Harder-Narasimhan filtration of $V_{F_G}$. 
   \end{prop}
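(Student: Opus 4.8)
The plan is to prove Proposition \ref{HN} by verifying the two defining properties of the Harder--Narasimhan filtration for $V_{\bullet,F_Q}$: that each successive quotient $V_{q,F_Q}/V_{q',F_Q}$ (for consecutive jumps $q > q'$) is semistable, and that the slopes are strictly decreasing as $q$ decreases. The key structural input is that each graded piece $\mathrm{gr}_q V = V_q/V_{q'} = \bigoplus_{\langle \deg F_Q, \nu\rangle = q} V[\nu]$ is, as a $Q$-representation, actually a representation of the Levi $L_Q$ (the weights appearing differ by elements of $\mathbb{Z}\Delta_Q$, on which $\langle \deg F_Q, -\rangle$ is constant, while $\deg F_Q$ pairs to a fixed value $q$ with all of them); hence $(\mathrm{gr}_q V)_{F_Q} = (F_Q\times_Q L_Q)\times_{L_Q}(\mathrm{gr}_q V)$ is the associated bundle of the $L_Q$-bundle $F_Q\times_Q L_Q$ with a twist by the central character of weight $q$.

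First I would record the slope computation: $\mu\big((\mathrm{gr}_q V)_{F_Q}\big) = q + \mu$ of the untwisted associated bundle, and the untwisted $L_Q$-associated bundle is degree $0$ on the nose because $F_Q\times_Q L_Q$ is semistable and any character of $L_Q$ trivial on $Z(L_Q)$ contributes nothing while the $Z(L_Q)$-part has already been separated out into the factor $q$ — so in fact $\mu\big((\mathrm{gr}_q V)_{F_Q}\big) = q$. This immediately gives strict decrease of slopes, since the $q$'s are by construction the distinct jump values taken in decreasing order. Second, for semistability of $(\mathrm{gr}_q V)_{F_Q}$: here is where strong canonicity enters. I would argue that since $F_Q\times_Q L_Q$ is \emph{strongly} semistable (this is exactly what strong canonicity of $F_Q$ buys — pulling back along any finite $f\colon C'\to C$ keeps $f^*F_Q$ canonical, hence its Levi bundle semistable), the associated bundle $(\mathrm{gr}_q V)_{F_Q\times_Q L_Q}$ attached to \emph{any} $L_Q$-representation is semistable. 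This is the standard fact that associated bundles of strongly semistable principal bundles are semistable — it is false for merely semistable bundles in characteristic $p$, which is precisely the reason the hypothesis cannot be weakened, matching the footnote. Combining, each $\mathrm{gr}_q V_{F_Q}$ is semistable of slope $q$, with the $q$ strictly decreasing, which are the two axioms characterizing the HN filtration, so by uniqueness $V_{\bullet, F_Q}$ is it.

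The main obstacle I anticipate is the semistability step, and specifically pinning down the precise form of ``associated bundle of a strongly semistable $G$-bundle is semistable'' that applies here. One must be careful that $\mathrm{gr}_q V$ as an $L_Q$-representation need not be semisimple or irreducible in characteristic $p$, so I cannot simply decompose into irreducibles; instead I would invoke the result in the form: if $E$ is a strongly semistable principal $H$-bundle on $C$ and $W$ is any $H$-representation, then $E\times_H W$ is semistable (proved by reduction to the case $H = \mathrm{GL}_n$ via a faithful representation, where it becomes the statement that tensor/tensor-subquotient constructions preserve strong semistability of vector bundles, a theorem available in positive characteristic — cf. the Ramanan--Ramanathan circle of ideas already cited in the toy example). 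A secondary point to handle carefully is the bookkeeping that the pairing $\langle \deg F_Q, -\rangle$ genuinely separates the central $Z(L_Q)$-weight from the $\mathbb{Z}\Delta_Q$-directions, so that the slope of the twisted graded piece is exactly $q$ and not merely $q$ up to a correction; this uses that $\deg F_Q \in X(T)^\vee_{\mathbb{Q}}$ lies in the image of the slope map from $X(Q)^\vee_{\mathbb{Q}}$, i.e.\ is orthogonal to $\mathbb{Z}\Delta_Q$ by construction of the canonical reduction.
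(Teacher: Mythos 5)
Your overall strategy — verify the two axioms of the Harder--Narasimhan filtration (semistability of graded pieces, strictly decreasing slopes) — matches the paper's proof, and your slope computation arrives at the correct answer $\mu\big((\mathrm{gr}_q V)_{F_Q}\big)=q$. However, there is a genuine gap in your semistability step, precisely where you anticipated trouble.

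The statement you propose to invoke, that \emph{if $E$ is a strongly semistable principal $H$-bundle and $W$ is any $H$-representation then $E\times_H W$ is semistable}, is false. Take $H=\mathbb{G}_m$, $E$ the $\mathbb{G}_m$-bundle of a degree-one line bundle (automatically strongly semistable), and $W=k_{1}\oplus k_{2}$, on which $t$ acts by $t$ and $t^2$ respectively: the associated bundle splits into summands of slopes $1$ and $2$. The Ramanan--Ramanathan theorem (Theorem \ref{RR} in the paper) requires that $\rho\colon H\to \gl(W)$ carry $Z(H)^{\circ}$ into $Z(\gl(W))^{\circ}$, i.e.\ that the connected center act by a single scalar character on $W$. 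This condition holds automatically when $W$ is \emph{simple} (Schur), but not for an arbitrary $H$-representation. And $\mathrm{gr}_q V$ need not satisfy it: the hyperplane $\langle\deg F_Q,\,\cdot\,\rangle = q$ contains, but is in general strictly larger than, the affine subspace $\nu+\mathbb{Q}\Delta_Q$, so weights in $\mathrm{gr}_q V$ can have distinct restrictions to $Z(L_Q)^{\circ}$. For the same reason your ``tensor-subquotient preserves strong semistability'' reduction does not cover the needed generality, since an arbitrary $H$-module need not be a subquotient of a tensor construction with a fixed central twist.

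The paper repairs exactly this: it passes to the Jordan--H\"older filtration of $\mathrm{gr}_q V$ as an $L_Q$-representation, applies Theorem \ref{RR} to each simple subquotient $W_i$ (where $Z(L_Q)^{\circ}\to\mathbb{G}_m$ is automatic), and then invokes Schieder's Proposition~3.2(a) to check that each $\mu\big((W_i)_{F_{L_Q}}\big)$ equals $q$ --- a pointwise statement that your slope computation (which gives $q$ only as an average over the whole graded piece) does not supply. With all graded pieces of the induced filtration on $(\mathrm{gr}_q V)_{F_{L_Q}}$ semistable of the common slope $q$, the subsequent lemma upgrades this to semistability of $(\mathrm{gr}_q V)_{F_{L_Q}}$ itself. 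Both the Jordan--H\"older reduction and the uniformity of slopes across the simple pieces are necessary steps absent from your argument, so as written the proof is incomplete.
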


\begin{proof}[Proof of Proposition \ref{HN}]
    It will suffice to check two assertions: all graded pieces are semistable vector bundles and their slopes are strictly decreasing. 

    As shown in \cite[Lemma 5.1]{Schieder2015The}, first on $\mathrm{gr}_q V_\bullet$ the unipotent radical $U(L_P)$ acts trivially, therefore $\mathrm{gr}_q V_\bullet$ is a $L_Q$-representation. Furthermore, we have
    \[
    \mathrm{gr}_q(V_{\bullet, F_Q})= V_{\bullet,F_Q}/V_{\bullet-1,F_Q}=(V_{\bullet}/V_{\bullet-1})_{F_Q}=(\mathrm{gr}_q V_\bullet)_{F_{L_Q}}.
    \]
    If one writes the characters that appears in $V_q$ as $\sum{m_i} \lambda_i$, then its slope is computed as in
    \[
    \mu((\mathrm{gr}_q V_\bullet)_{F_{L_Q}})=\frac{\sum\langle\deg F_Q,\lambda_i\rangle m_i}{\sum m_i}=q. 
    \]
    Therefore, the slopes of the graded pieces are decreasing by definition. 

    To prove the semistability, consider the semisimplification $W$ of $\mathrm{gr}_q V_\bullet$ as a representation of ${L_Q}$. From the same computation, we see that for each simple direct summand $W_i$ of $W$ the associated bundle $W_{i, F_{L_Q}}$ has slope $q$. Thus $(\mathrm{gr}_q V_\bullet)_{F_{L_Q}}$ admits a filtration by subbundles such that each graded piece is a semistable vector bundle of slope $q$, where the semistability is obtained by applying Theorem \ref{RR} to ${L_Q} \to \gl(W_i)$. Then $(\mathrm{gr}_q V_\bullet)_{F_M}$ is also semistable of slope $q$ by the following. 
\end{proof}

\begin{lem}
    If $E$ is a vector bundle on a curve $C$ that admits a filtration of subbundles 
    \[
    0 \subsetneq E_1 \subsetneq \cdots \subsetneq E_r=E, 
    \]
    and all graded pieces are semistable of slope equals to a given number $\lambda$, then $E$ is also semistable of slope equals to $\lambda$. 
\end{lem}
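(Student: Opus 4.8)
The plan is to establish the statement by an induction on the length $r$ of the filtration, using the case $r=2$ (a single extension) as the base case and engine of the argument. So first I would reduce to the following claim: if
\[
0 \longrightarrow E' \longrightarrow E \longrightarrow E'' \longrightarrow 0
\]
is a short exact sequence of vector bundles on $C$ with $E'$ and $E''$ semistable of the same slope $\lambda$, then $E$ is semistable of slope $\lambda$. Indeed, granting this, given the filtration $0\subsetneq E_1\subsetneq\cdots\subsetneq E_r=E$ one applies the inductive hypothesis to $E_{r-1}$ (which carries the filtration $0\subsetneq E_1\subsetneq\cdots\subsetneq E_{r-1}$ with all graded pieces semistable of slope $\lambda$) to conclude $E_{r-1}$ is semistable of slope $\lambda$, and then applies the base case to $0\to E_{r-1}\to E\to E/E_{r-1}\to 0$, noting $E/E_{r-1}=\mathrm{gr}_r$ is semistable of slope $\lambda$ by hypothesis.

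For the base case, the slope computation is immediate from additivity of degree and rank in exact sequences: $\deg E=\deg E'+\deg E''$ and $\operatorname{rk} E=\operatorname{rk} E'+\operatorname{rk} E''$, and since $\mu(E')=\mu(E'')=\lambda$ we get $\mu(E)=\lambda$. For semistability I would take an arbitrary nonzero subbundle $S\subseteq E$ and show $\mu(S)\le\lambda$. Write $S'=S\cap E'$ (saturated in $S$, hence a subbundle) and let $S''$ be the image of $S$ in $E''$; then $0\to S'\to S\to S''\to 0$ need not be exact on the level of subbundles, but after replacing $S''$ by its saturation $\overline{S''}$ in $E''$ one has $\deg S\le \deg S'+\deg\overline{S''}$ (saturating can only increase degree, and the quotient $S/S'$ injects into $\overline{S''}$ with the same rank, so $\deg(S/S')\le\deg\overline{S''}$). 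Now $S'\subseteq E'$ gives $\deg S'\le\lambda\operatorname{rk} S'$ by semistability of $E'$, and $\overline{S''}\subseteq E''$ gives $\deg\overline{S''}\le\lambda\operatorname{rk}\overline{S''}=\lambda\operatorname{rk}(S/S')$ by semistability of $E''$. Adding, $\deg S\le\lambda(\operatorname{rk} S'+\operatorname{rk}(S/S'))=\lambda\operatorname{rk} S$, i.e. $\mu(S)\le\lambda$, as desired. (Equivalently, one can phrase this via the dual characterization using quotients, which avoids saturation subtleties on one side.)

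The main obstacle — really the only point requiring care — is the degree inequality $\deg S\le\deg S'+\deg(S/S')$ together with the comparison of $S/S'$ with a subbundle of $E''$: one must be careful that $S/S'$ is a subsheaf of $E''$ of full generic rank inside its saturation, so that passing to the saturation does not decrease rank and does not decrease degree. This is the standard lemma that for a coherent subsheaf of a locally free sheaf on a smooth curve, the saturation has the same rank and degree at least as large; I would either cite it or dispatch it in one line. Everything else is bookkeeping with the additivity of degree and rank in short exact sequences of coherent sheaves on a smooth projective curve, which holds because every coherent sheaf has a well-defined rank and degree and these are additive.

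\begin{proof}
We argue by induction on $r$, the case $r=1$ being trivial. By the induction hypothesis applied to $0\subsetneq E_1\subsetneq\cdots\subsetneq E_{r-1}$, the bundle $E_{r-1}$ is semistable of slope $\lambda$. Since $E/E_{r-1}=\mathrm{gr}_r$ is semistable of slope $\lambda$ by assumption, it suffices to treat the case of a single extension
\[
0 \longrightarrow E' \longrightarrow E \longrightarrow E'' \longrightarrow 0
\]
with $E',E''$ semistable of slope $\lambda$. Additivity of degree and rank gives $\mu(E)=\lambda$. Let $S\subseteq E$ be a nonzero subbundle; put $S'=S\cap E'$, a subbundle of $E'$, and let $\overline{S''}\subseteq E''$ be the saturation of the image of $S$ in $E''$, a subbundle of $E''$ of rank $\operatorname{rk} S-\operatorname{rk} S'$ containing the image of $S/S'$, so that $\deg(S/S')\le\deg\overline{S''}$. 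Then
\[
\deg S = \deg S' + \deg(S/S') \le \lambda\operatorname{rk} S' + \lambda\bigl(\operatorname{rk} S-\operatorname{rk} S'\bigr) = \lambda\operatorname{rk} S,
\]
using semistability of $E'$ for the bound $\deg S'\le\lambda\operatorname{rk} S'$ and of $E''$ for $\deg\overline{S''}\le\lambda\operatorname{rk}\overline{S''}$. Hence $\mu(S)\le\lambda=\mu(E)$, so $E$ is semistable of slope $\lambda$.
\end{proof}
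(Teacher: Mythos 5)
Your argument is correct and is the standard one (induction reducing to a single extension, then bounding $\deg S$ via $S'=S\cap E'$ and the saturation of its image in $E''$). The paper itself states this lemma without proof, treating it as well known, so there is nothing in the text to compare against; the only cosmetic remark is that $S'=S\cap E'$ need not be saturated in $E'$ (only in $S$), but this is harmless since semistability bounds the slope of arbitrary subsheaves, not just subbundles.
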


\begin{proof}
    The proof is elementary. See, for example \cite[Page 580]{2023Algebraic}
\end{proof}

\begin{cor}
    Assume $F_Q$ is the strongly canonical reduction of $F$. The Harder-Narasimhan filtration of $F_{Q} \times_{Q} H^{0}\left(\overline{Q w P} / P, M_{\lambda}\right)$ is $F_{Q} \times_{Q} H^{0}\left(\overline{Q w P} / P, M_{\lambda}\right)_{\bullet, \operatorname{deg}\left(F_{Q}\right)}$. Moreover, the maximal slope is $\left\langle\operatorname{deg}\left(F_{Q}\right), w \lambda\right\rangle$.
\end{cor}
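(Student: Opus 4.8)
First I would recast the statement as an instance of Proposition~\ref{HN}. Set $V:=H^{0}(\overline{QwP}/P,M_{\lambda})$; by the Lemma of \cite{jantzen1996representations} recalled above, $F_{Q}\times_{Q}V$ is precisely $\pi_{*}\!\left(\mathcal{L}_{\lambda}|_{\mathcal{X}_{w}}\right)$, so it is its Harder-Narasimhan filtration and maximal slope that must be determined. Since $B\subseteq Q$, the vector space $V$ is a $Q$-module (hence a $B$-module) via the left $Q$-action on $\overline{QwP}/P\subseteq G/P$, and the only input that Proposition~\ref{HN} needs from this is that $V$ is a $Q$-representation of highest weight $w\lambda$. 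The plan is therefore: (Step~1) identify $V$ as such, (Step~2) feed it into Proposition~\ref{HN} and read off the maximal slope.

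\emph{Step 1: the Demazure input.} Here I would appeal to the theory of Demazure modules, which is valid in arbitrary characteristic. The variety $\overline{QwP}/P$ is a normal Schubert variety and $M_{\lambda}$ is the restriction of the ample bundle $\mathcal{L}_{\lambda}$ on $G/P$; hence $V=H^{0}(\overline{QwP}/P,M_{\lambda})$ is the associated (dual) Demazure module. In particular $V$ is cyclic over $B$, generated by a one-dimensional $T$-weight space whose weight equals $w\lambda$ up to the action of $W_{Q}$, and consequently every $T$-weight $\nu$ of $V$ satisfies $w\lambda-\nu\in\mathbb{Z}_{\ge 0}\Delta$. The substantive ingredients here are the normality of Schubert varieties and the existence of a $B$-canonical Frobenius splitting in characteristic $p$ (cf. \cite{Ramanan1985ProjectiveNO}; see also \cite{jantzen1996representations}); this is the step where positive-characteristic algebraic geometry replaces the Borel-Weil-Bott theorem used in the characteristic-zero argument of \cite{fan2024arakelovgeometryflagvarieties}. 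The ambiguity in the representative $w$ of the double coset $W_{Q}\backslash W/W_{P}$ is harmless: as observed in Subsection~\ref{3.1}, changing $w$ within $W_{Q}w$ alters $w\lambda$ only by an element of $\mathbb{Z}\Delta_{Q}$, which affects neither the filtration nor the slopes below. Thus $V$ is a $Q$-representation of highest weight $w\lambda$ in the sense of Proposition~\ref{HN}.

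\emph{Step 2: applying Proposition~\ref{HN}.} Because $F_{Q}$ is the strongly canonical reduction, Proposition~\ref{HN} applies to the $Q$-representation $V$ and identifies the Harder-Narasimhan filtration of $F_{Q}\times_{Q}V$ with the $F_{Q}$-twist of the weight filtration $V_{q}=\bigoplus_{\langle\deg F_{Q},\nu\rangle\ge q}V[\nu]$; in the notation of the statement this is exactly $F_{Q}\times_{Q}H^{0}(\overline{QwP}/P,M_{\lambda})_{\bullet,\deg(F_{Q})}$, which is the first assertion. (Each $V_{q}$ is a $Q$-subrepresentation, so $F_{Q}\times_{Q}V_{q}$ is a subbundle, since the unipotent radical of $Q$ raises weights by positive roots not in $\Phi_{Q}$, along which $\langle\deg F_{Q},-\rangle$ is strictly positive by canonicity, while $L_{Q}$ fixes $\langle\deg F_{Q},-\rangle$; cf. \cite{Schieder2015The}.) For the maximal slope: it equals the slope of the bottom step $V_{q_{1},F_{Q}}$, where $q_{1}$ is the largest jump of $V_{\bullet}$. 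Writing $w\lambda-\nu=\sum_{\alpha\in\Delta}c_{\alpha}\alpha$ with $c_{\alpha}\ge 0$ for a weight $\nu$ of $V$ (Step~1), and using that the canonical reduction satisfies $\langle\deg F_{Q},\alpha\rangle=0$ for $\alpha\in\Delta_{Q}$ and $\langle\deg F_{Q},\alpha\rangle>0$ for $\alpha\in\Delta\setminus\Delta_{Q}$, one gets $\langle\deg F_{Q},\nu\rangle\le\langle\deg F_{Q},w\lambda\rangle$ with equality at $\nu=w\lambda$. Hence $q_{1}=\langle\deg F_{Q},w\lambda\rangle$, the bottom step $V_{q_{1}}=V[w\lambda+\mathbb{Z}\Delta_{Q}]$ coincides with its own graded piece, and the slope computation in the proof of Proposition~\ref{HN} gives this piece slope $q_{1}$ after twisting by $F_{Q}$; therefore $\mu_{\max}(F_{Q}\times_{Q}V)=\langle\deg F_{Q},w\lambda\rangle$.

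The main obstacle is Step~1 — recognizing $H^{0}(\overline{QwP}/P,M_{\lambda})$ as the Demazure module with extremal weight $w\lambda$ and, crucially, that all of its $T$-weights lie weakly below $w\lambda$ in the dominance order. This rests on the standard but non-trivial normality and Frobenius-splitting theory of Schubert varieties in characteristic $p$; once it is granted, Step~2 is routine bookkeeping layered on top of Proposition~\ref{HN}.
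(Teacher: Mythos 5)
Your proof is correct and takes essentially the same route as the paper: both reduce the first assertion to Proposition \ref{HN} applied to $V=H^0(\overline{QwP}/P,M_\lambda)$, and both obtain the maximal slope by constraining the weight multiset of $V$ via Demazure module theory, using normality of Schubert varieties in characteristic $p$ (Ramanan--Ramanathan) as the crucial positive-characteristic input — the paper delegates this last weight computation to \cite{fan2024arakelovgeometryflagvarieties} and to \cite[\S 14, Prop.~12]{jantzen1996representations}, while you spell it out explicitly.
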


\begin{proof}
    The first assertion is obtained by taking $V$ to be $H^{0}\left(\overline{Q w P} / P, M_{\lambda}\right)$ in Proposition \ref{HN}. For the second assertion, it will suffice to show the highest weights in $H^{0}\left(\overline{Q w P} / P, M_{\lambda}\right)$ belong to $w\lambda+\Z[\Delta_Q]$. The argument in \cite{fan2024arakelovgeometryflagvarieties} is also valid without any assumption on the characteristic of base field. Basically, one can use the restriction map 
    \[
    \mathrm{H}^0(\overline{QwP}/P, M_\lambda) \to \mathrm{H}^0(\overline{Bw'P}/P, M_\lambda)
    \]
    is surjective for all $w'$ lying over the fibre of $w$ along $W/W_P \to W_Q\backslash W/W_P$, and the diagonal map 
    \[
    \mathrm{H}^0(\overline{QwP}/P, M_\lambda) \to \prod_{w'} \mathrm{H}^0(\overline{Bw'P}/P, M_\lambda)
    \]
    is injective. These show that the highest weights appears and only appears in that of $\mathrm{H}^0(\overline{Bw'P}/P, M_\lambda)$ for all $w'$, and they ahve the unique highest weights $w'\lambda\in \lambda+\Z[\Delta_Q]$. 
\end{proof}

\begin{cor}
    Under the assumption that $F$ has strongly canonical reduction, the essential minimum $\zeta_1(h_{\mathcal L_\lambda}, X_w)$ of $h_{\mathcal L_\lambda}$ on $X_w$ is $\langle \deg(F_Q), w\lambda\rangle$. 
\end{cor}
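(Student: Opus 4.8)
Proof proposal for the final corollary ($\zeta_1(h_{\mathcal{L}_\lambda}, X_w) = \langle \deg(F_Q), w\lambda\rangle$).

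The plan is to combine the Schubert-cell lower bound (Proposition in §3.1) with an upper bound coming from the slope of the maximal piece of the Harder–Narasimhan filtration of the pushforward bundle. First I would recall the lower-bound half: by the displayed Proposition in §3.1, every $x \in C_w(\overline{K})$ satisfies $h_{\mathcal{L}_\lambda}(x) \geq \langle \deg F_Q, w\lambda\rangle$, and since $C_w$ is dense in $X_w$ and the bound applies on the larger orbit-closure stratification (the Schubert cells $C_{w'} \subseteq X_w$ have $\langle \deg F_Q, w'\lambda\rangle \geq \langle \deg F_Q, w\lambda\rangle$ because $w$ is the minimal element), the Zariski closure of the sublevel set $\{h_{\mathcal{L}_\lambda} < t\}$ inside $X_w$ is empty for $t \leq \langle \deg F_Q, w\lambda\rangle$. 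Hence $\zeta_{\mathrm{ess}}(X_w) \geq \langle \deg F_Q, w\lambda\rangle$.

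For the reverse inequality I would invoke the general principle relating the essential minimum of a geometric height to the maximal slope of pushforwards: for the adelic line bundle defined by $(\mathcal{X}_w, \mathcal{L}_\lambda|_{\mathcal{X}_w})$ one has
\[
\zeta_{\mathrm{ess}}(X_w) \leq \limsup_{m} \frac{\mu_{\max}\big(\pi_*(\mathcal{L}_\lambda^{\otimes m}|_{\mathcal{X}_w})\big)}{m},
\]
which is the flag-variety analogue of Ballaÿ's result quoted in §1.2 for projective space (in the semistable/strongly-canonical setting the limit exists and is computed fiberwise). By the Lemma of Jantzen, $\pi_*(\mathcal{L}_\lambda|_{\mathcal{X}_w}) = F_Q \times_Q H^0(\overline{QwP}/P, M_\lambda)$, and by the preceding Corollary (applied with $V = H^0(\overline{QwP}/P, M_\lambda)$, using that $F_Q$ is strongly canonical so Proposition \ref{HN} gives the HN filtration) its maximal slope is exactly $\langle \deg(F_Q), w\lambda\rangle$. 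The same computation for $\mathcal{L}_\lambda^{\otimes m}$ — whose pushforward is $F_Q \times_Q H^0(\overline{QwP}/P, M_{m\lambda})$ with highest weight $mw\lambda$ — gives maximal slope $m\langle \deg(F_Q), w\lambda\rangle$, so the $\limsup$ equals $\langle \deg(F_Q), w\lambda\rangle$ and the upper bound follows.

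The main obstacle is justifying the inequality $\zeta_{\mathrm{ess}} \leq \limsup_m \mu_{\max}/m$ cleanly in positive characteristic: one needs that a section of $\pi_*(\mathcal{L}_\lambda^{\otimes m})$ lying in a subbundle of slope close to the maximum produces, via its zero locus, a proper closed subset off which the height is controlled by that slope — and one must check that the strongly-canonical hypothesis is genuinely being used (it is what makes the HN filtration of the pushforward commute with the finite base changes appearing in the definition of $h$, hence what makes the fiberwise slope computation legitimate). I expect the argument to parallel Ballaÿ's and the char-$0$ treatment in \cite{fan2024arakelovgeometryflagvarieties} step by step, the only new input being Proposition \ref{HN} and its Corollary, which have already been established above; so the remaining work is bookkeeping rather than a new idea.
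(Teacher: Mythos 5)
Your approach is essentially the same as the paper's: quote Ballaÿ's theorem identifying $\zeta_{\mathrm{ess}}(X_w)$ with $\lim_n \mu_{\max}(\pi_*\mathcal{L}_{n\lambda}|_{\mathcal{X}_w})/n$, and then compute the maximal slope using the preceding corollary (which rests on Proposition~\ref{HN} and hence the strongly-canonical hypothesis). That is exactly what the paper does, and your identification of Proposition~\ref{HN} as the place where the strongly-canonical hypothesis enters is correct; the Ballaÿ input itself does not require it.

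Two small corrections to the redundant lower-bound half. First, the parenthetical claim that $\langle \deg F_Q, w'\lambda\rangle \geq \langle \deg F_Q, w\lambda\rangle$ for $C_{w'} \subseteq X_w$ has the inequality backwards: consistency with Theorem~\ref{charp} (where $Z_t = \coprod_{\langle \deg F_Q, w\lambda\rangle < t} C_w$ is \emph{closed}) forces the pairing to be non-increasing as one passes to cells in the closure, not non-decreasing. Consequently the Zariski closure of the sublevel set $\{h_{\mathcal{L}_\lambda} < t\}$ in $X_w$ for $t \leq \langle \deg F_Q, w\lambda\rangle$ need not be empty, only proper. But this does not damage the conclusion: the open dense cell $C_w$ is disjoint from the sublevel set by the $\S 3.1$ Proposition, so the closure of the sublevel set is automatically a proper subset of $X_w$ and $\zeta_{\mathrm{ess}}(X_w) \geq \langle \deg F_Q, w\lambda\rangle$ follows. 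In any case this half is superfluous, since Ballaÿ's theorem already gives equality, which is what the paper uses.
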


\begin{proof}
    By Balla\"y's theorem \cite[2]{Ballay21}, 
    \[
    \zeta_1(h_{\mathcal L_\lambda}, X_w)=\lim_{n\to \infty} \frac{\mu_{\rm max}(\pi_*\mathcal L_{n\lambda}|_{X_w})}{n}=\lim_{n\to \infty} \frac{n\langle \deg(F_Q), w\lambda\rangle}{n}= \langle \deg(F_Q), w\lambda\rangle. 
    \]
\end{proof}

\begin{proof}[Proof of Theorem \ref{charp}]
    The arguments follows from the fact that the lower bound and essential minimum are invariant under taking closed subvariety as in \cite[Proof of Theorem 2.1]{fan2024arakelovgeometryflagvarieties}. 
\end{proof}

Now we treat the case $F$ may not have a strongly canonical reduction. 

\begin{proof}[Proof of Theorem \ref{twist}]
    Theorem 5.1 of \cite{Langer05} shows for a principal $G$-bundle $F$ (might not admits a strongly canonical reduction), the strongly canonical reduction exists for $({\rm Fr}^n)^* F$, when $n$ is sufficiently large. We have the following cartesian diagram
 \[\begin{tikzcd}
	(\operatorname{Fr_C}^n)^* F /P & F/P \\
	C & C
	\arrow["\phi",from=1-1, to=1-2]
	\arrow[from=1-1, to=2-1]
	\arrow[from=1-2, to=2-2]
	\arrow["\operatorname{Fr}^n" ,from=2-1, to=2-2]
\end{tikzcd}\]
with its generic fibre
 \[\begin{tikzcd}
	\Tilde{X}=((\operatorname{Fr_K}^n)^* F /P)_K & X=(F/P)_K \\
	\spec K & \spec K
	\arrow["\phi_K",from=1-1, to=1-2]
	\arrow[from=1-1, to=2-1]
	\arrow[from=1-2, to=2-2]
	\arrow["\operatorname{Fr}^n" ,from=2-1, to=2-2]
\end{tikzcd}\]
Here $\operatorname{Fr_C}$ is the absolute Frobenius on $C$. Suppose $n$ is large enough such that $(\operatorname{Fr}^n)^* F$ has strongly canonical reduction. Note that $\phi$ is purely inseparable and we have the equality
\[
\frac{1}{p^n}h_{\phi^*\mathcal L_\lambda}(x)=h_{\mathcal L_\lambda}(\phi_K(x))
\]
for all $x\in \Tilde{X}$. Our theorem follows. 
\end{proof}

\newpage
\bibliography{ref}
\bibliographystyle{plain}
\end{document}